\theoremstyle{theorem}
\newtheorem{theorem}{Theorem}[section]
\newtheorem{proposition}[theorem]{Proposition}
\newtheorem{lemma}[theorem]{Lemma}
\theoremstyle{definition}
\newtheorem{definition}[theorem]{Definition}
\newtheorem{notation}[theorem]{Notation}
\newtheorem{remark}[theorem]{Remark}
\newcommand{\teich}{\mathcal{T}}
\newcommand{\len}{\mathrm{length}} 
\newcommand{\ml}{\mathcal{ML}}
\newcommand{\reals}{\mathbb{R}}
\newcommand{\naturals}{\mathbb{N}}
\newcommand{\dth}{d_\mathrm{Th}}
\renewcommand{\th}{\mathrm{Th}}
\newcommand{\lip}{\mathrm{Lip}}
\newcommand{\CR}{\mathrm{cr}}
\newcommand{\mcr}{\mathrm{MCR}}
\newcommand{\ie}{i.e.\ }
\newcommand{\mute}[1] {}
\newcommand{\mc}[1]{\mathcal{#1}}
\newcommand{\mf}[1]{\mathfrak{#1}}
\DeclareMathOperator{\str}{stretch}
\newcommand{\vst}{\mathbf v}
\def \R{\mathbb{R}}
\def \del{\partial}
\def \eps{\varepsilon}
\DeclareMathOperator{\Span}{Span}
\DeclareMathOperator{\cat}{cat}
\newcommand{\MF}{\ensuremath{\mathcal{MF}}\xspace}
\renewcommand{\v}{\mathbf v}
\title[The unit tangent spheres in Teichm\"{u}ller space]{Convex structures of the unit tangent spheres in Teichm\"{u}ller space}
\author{Assaf Bar-Natan, Ken'ichi Ohshika and Athanase Papadopoulos}
\begin{document}
\maketitle 
\sloppy

\begin{abstract}

We analyse the convex structure of the Finsler unit ball in the tangent space at each point of Teichm\"uller space of a closed surface of genus $\geq 2$ equipped with Thurston's metric.  We obtain a characterisation of faces, exposed faces and extreme points of such a unit sphere. In particular, we prove that every face has a unique naturally associated chain-recurrent geodesic lamination such that this face consists of the unit tangent vectors which are linear combinations of stretch vectors along maximal chain-recurrent geodesic laminations containing the given one. We show that a face is exposed if and only if its associated chain-recurrent geodesic lamination is the support of a measured lamination. Furthermore, we show that a point on a tangent unit sphere is an extreme point if and only if it is a stretch vector along some maximal chain-recurrent geodesic lamination. The last result gives an affirmative answer to a conjecture whose answer was known positively in the case where the surface is either the once-punctured torus or the 4-punctured sphere. Our main results also provide an alternative approach to the topological part of the infinitesimal rigidity result concerning Thurston's metric and the equivariance property of stretch vectors. 
\medskip

\noindent {\it Keywords}.— Teichm\"uller space, hyperbolic surface, Thurston metric, Finsler structure of Thurston's metric, stretch map, convex geometry, chain recurrent geodesic lamination, rigidity.

\medskip

\noindent {\it AMS classification}.— 32G15, 30F60, 30F10, 52A21.

\end{abstract}
\section{Introduction}
In the paper \cite{Teich}, Teichm\"{u}ller introduced the space of quasi-conformal deformations of marked Riemann surfaces  (see \cite{TeichTr} for an English translation of this paper).
He showed in particular that between any two homeomorphic marked Riemann surfaces, there is a unique best quasi-conformal homeomorphism that preserves the marking. This homeomorphism is called the Teichm\"{u}ller map. Teichm\"{u}ller  also showed that the logarithm of  the dilatation of this map induces a Finsler metric on Teichm\"{u}ller space, a metric which is now called the Teichm\"{u}ller metric. In the paper \cite{ThM}, Thurston introduced an asymmetric Finsler metric on Teichm\"{u}ller space using Lipschitz maps instead of quasi-conformal maps. This metric is now called Thurston's metric.
The last section of Thurston's paper (p.\,45-53) contains a geometric investigation of the convex structure of the Finsler unit sphere in the tangent space at every point of Teichm\"uller space with respect to this asymmetric metric.
In the present  paper, based on recent progress on the infinitesimal geometry of Thurston's metric, in particular observations made in \cite{Infinitesimal}, we analyse this convex structure more profoundly. 
%We shall first show in \cref{BN} that every tangent vector can be expressed as a linear combination of stretch vectors along maximal chain-recurrent laminations making use of recent results of Pan--Wolf \cite{PW}.
%Applying this theorem, we shall give characterisations of faces and exposed faces of this unit sphere in terms of linear combinations of linear combinations of stretch vectors along some chain-recurrent geodesic laminations (\cref{face,exposed}).

To state our results concretely, we first recall some classical notions from convex geometry. References for these notions include the books \cite{Eggleston,Rock}.
Let $V$ be a finite-dimensional vector space and $\mathcal{S}$ a convex sphere in $V$, that is, the boundary of a compact convex ball $\mathcal B$ of non-empty interior in $V$. 
A convex proper subset $F\subset\mathcal B$ is said to be a \emph{face} of $\mathcal S$ if any segment in $\mathcal B$ intersecting $F$ at an interior point of this segment has both endpoints in $F$.
A \emph{support hyperplane} to $\mathcal B$ is a hyperplane $H$ separating $\mathcal B$ from its complement in $V$.
A face  of $\mathcal S$ is said to be \emph{exposed} if there exists a support hyperplane to $\mathcal B$
 such that $F= \mathcal S \cap H$.
As will be seen in \cref{convex geometry}, there are faces  of unit spheres associated with Thurston's metric which are not exposed.
Finally, we say that a point $v\in\mathcal S$ is \emph{extreme} if there is no pair $u,w$ in $\mathcal S$ such that $x$ is contained in the  segment $[u,w]$ without being an endpoint of this segment.

Let $\teich(S)$ be the Teichm\"{u}ller space  of a closed orientable surface $S$ of genus $\geq 2$. Let $x$ be a point in 
$\teich(S)$ and let
 $\mathcal S_x$ be the unit tangent sphere at $x$ with respect to the norm associated with Thurston's metric.
Our first main result, \cref{BN}, will show that any tangent vector to $\teich(S)$ at $x$ can be expressed as a linear combination of finitely many stretch vectors $\v_x(\lambda_1), \dots , \v_x(\lambda_m)$ with respect to maximal chain-recurrent geodesic laminations $\lambda_1, \dots, \lambda_m$ sharing a non-empty chain-recurrent geodesic lamination $\lambda$ as a sublamination.
Using this, we shall give a characterisation of faces, exposed faces and extreme points of the unit sphere in every tangent space to $\teich(S)$ with respect to Thurston's metric.
In \cref{face}, we prove that to every face $F$ of $\mathcal S_x$, there is a naturally associated and unique chain-recurrent geodesic lamination $\lambda$ on $S$ such that $F$ consists of the unit tangent vectors which can be expressed as  linear combinations of stretch vectors along maximal chain-recurrent geodesic laminations containing $\lambda$.
In \cref{exposed}, we show that a face $F$ is exposed if and only if its associated chain-recurrent geodesic lamination $\lambda$ is an unmeasured lamination, \ie the support of a measured lamination.
%These theorems imply as a corollary that the combinatorial type of the unit tangent sphere is independent of its base point.
In  the last of our main theorems, \cref{extreme}, we show that a point on the unit tangent sphere $\mathcal S_x$ is extreme if and only if it is a unit stretch vector along some maximal chain-recurrent geodesic lamination.
This gives an affirmative answer to Conjecture 1.13 of \cite{Infinitesimal} which was known to be true only in the case where the surface is either the once-punctured torus or the 4-punctured sphere, see \cite{DLRT}. In the last section, by using our main results, we give an alternative approach to  the topological part of the so-called infinitesimal rigidity and the equivariance properties of stretch vectors, two results contained in the paper \cite{Infinitesimal}. The new approach does not invoke cotangent spaces neither in the statement nor in the proof, in contrast to what is done in \cite{Infinitesimal}.

Thurston's metric for the Teichm\"{u}ller space of the once-punctured torus was  studied in the papers  \cite{DLRT, HP}.
In the paper \cite{DLRT}, Dumas, Lenzhen, Rafi and Tao gave a description of the tangent spaces equipped with the norm associated with Thurston's metric in that setting, and this was one of the motivations of our present work.
A part of the present paper, in particular \S3, is adopted from the first author's PhD dissertation \cite{BN}.

The authors would like to express their gratitude to Yi Huang who served as a bridge between the first author and the other two authors  living far apart geographically.

\section{Preliminaries}
\subsection{Thurston's metric}
Let $S$ be a closed orientable surface of genus $g \geq 2$, and let $\teich(S)$ be its Teichm\"{u}ller space.
In this paper, $\teich(S)$ is regarded as the space of equivalence classes of pairs $(\Sigma, f)$  where $\Sigma$ is a hyperbolic surface and $f \colon S \to \Sigma$  a homeomorphism (the \emph{marking}) and where two pairs $(\Sigma_1, f_1)$ and $(\Sigma_2, f_2)$ are defined to be equivalent when $f_2 \circ f_1^{-1}$ is homotopic to an isometry.
We can also regard $\teich(S)$ as the space of hyperbolic metrics on $S$ modulo isotopy.
For $x \in \teich(S)$, we denote the tangent space $\teich(S)$ at $x$ by $T_x\teich(S)$. 

Throughout this paper, we study the Teichm\"{u}ller space  $\teich(S)$ equipped with Thurston's metric, introduced in \cite{ThM}. Since Thurston's metric does not satisfy the symmetry action which is usually part of the definition of a metric space, it is sometimes referred to an `asymmetric metric'.
But to save words, we use the term `metric' instead of `asymmetric metric'.
We start by recalling the definition of this metric and some of its basic properties.
For a Lipschitz map $f$ from a metric space $(X,d_X)$ to another metric space  $(Y,d_Y)$, we set $$ \lip(f)=\displaystyle\sup_{x_1\neq x_2}\frac{d_Y(f(x_1), f(x_2))}{d_X(x_1, x_2)}.$$ 
For any two points $x=(\Sigma_x, f_x)$ and $y=(\Sigma_y, f_y)$ in $\teich(S)$, the Thurston distance $\dth(x,y)$ is defined by $$\dth(x,y) = \log \inf_{f \simeq f_y \circ f_x^{-1}} \lip (f),$$
where the infimum is taken over all homeomorphisms $f$ homotopic to $f_y \circ f_x^{-1}$.
Thurston proved that this distance is also expressed as 
\begin{equation}
\label{scc}
\dth(x,y) =\log \sup_{c \in \mathcal C}\frac{\len_{\Sigma_y}(f_y(c))}{\len_{\Sigma_x}(f_x(c))},
\end{equation} 
where $\mathcal C$ denotes the set of isotopy classes of non-contractible simple closed curves on $S$, and where $\len$ denotes the length of the unique geodesic in the given homotopy class.
In the same paper, Thurston proved that the distance function is induced by a Finsler metric on Teichm\"uller space given by the family of norms  on the tangent space $T_x \teich(S)$ at each point $x$, defined by 
\begin{equation}
\Vert v \Vert_\th =\sup_{c\in \mathcal C} d\log\ell_c(v)=\sup_{c \in \mathcal C} \frac{d\ell_c(v)}{\len_{\Sigma_x}(f_x(c))},
\end{equation}
 where $\ell_c$ is the function taking $x$ to the geodesic length of $f_x(c)$ on $\Sigma_x$.
 
From now on, we shall use the shorter symbol $\len_x(c)$ to denote $\len_{\Sigma_x}(f_x(c))$, and regard a point $x$ in $\teich(S)$ as a hyperbolic structure on $S$ (rather than an equivalence class).
%We shall use the above definitions in the case where the convex sphere is  $\mathcal S_x$,  the unit tangent sphere bounding the unit tangent ball $\mathcal B_x$ of Thurston's metric at some point $x$ of $\teich(S)$.

\subsection{Geodesic laminations}
Most notions regarding geodesic laminations on surfaces originate in Thurston's work.  We refer the reader to \cite{ThL,Ker,HP92}.
A geodesic lamination on a hyperbolic surface $\Sigma$ is a closed subset which is the union of disjoint simple geodesics. Such a geodesic, if it is not extendable, is called a leaf of the geodesic lamination.
In principle, one can talk about a geodesic lamination only on a surface equipped with a hyperbolic structure. Still, when we vary a hyperbolic structure on a surface, a geodesic lamination for the original metric is isotopic to a unique geodesic lamination on the new hyperbolic metric.
In this way, we can also regard the notion of geodesic lamination as being defined on a topological surface $S$.
%We shall use some terminology introduced in the paper \cite{OP}. 

Let $\lambda$ be a geodesic lamination on  $S$. 
By a \emph{component} of $\lambda$, we mean a connected component  of $\lambda$ as a subset of $S$. 
A lamination is said
to be \emph{minimal} if any nonempty sublamination of $\lambda$ is $\lambda$ itself.  
Minimal sublaminations of $\lambda$  are called {\em minimal components} of $\lambda$ although they are not necessarily components in the sense of the preceding definition.
A leaf of a geodesic
lamination is said to be \emph{isolated} if, as a subset of S, it has a neighbourhood intersecting
no leaf other than itself.  
Any geodesic lamination $\lambda$ has a unique decomposition into
finitely many minimal components and finitely many non-compact isolated leaves. 
Note that non-compact isolated leaves are not minimal components since such a leaf is not a sublamination (it is not a closed subset of S).

Chain-recurrence was introduced in Thurston's paper \cite[p.\,24-25]{ThM}.
We use, as a definition, a characterisation of chain-recurrence which is contained in the same paper.
\begin{definition}
A geodesic lamination is said to be {\em chain-recurrent} if it is a Hausdorff limit of a sequence of disjoint unions of closed geodesics.
%%[$\epsilon$-trajectory and chain-recurrent geodesic lamination]
%Let $\mu$ be a geodesic lamination on a closed hyperbolic surface $\Sigma$, let $a$ be a point on $\mu$ and let $\epsilon$ be a non-negative real number. 
%An \emph{$\epsilon$-trajectory} of $\mu$ through $x$ is a path on $S$ which is parametrised by unit speed such that for any interval of
%length 1 on this path there is an interval of length 1 on some leaf of $\mu$ such
%that the two paths are $\epsilon$-close in the $C^1$-topology.
%
%The geodesic lamination $\mu$ is said to be \emph{chain-recurrent} if for
%any $\epsilon>0$ and for any point $x$ on $\mu$ there exists a closed path which is an $\epsilon$-trajectory of $\mu$ through $x$.
%To shorten the term, we call a geodesic lamination which is chain-recurrent \lq a chain-recurrent lamination'.
%
\end{definition}
The notion of chain-recurrence does not depend on the choice of the underlying hyperbolic metric on $S$.
Any geodesic lamination $\lambda$ has a unique maximal (in the sense of inclusion) chain-recurrent sublamination, which we denote by $\lambda^\CR$.
We always have $\lambda^\CR \neq \emptyset$ since  a minimal component of $\lambda$ is always chain-recurrent.
It is easy to see by a diagonal argument that a Hausdorff limit of chain-recurrent geodesic laminations is also chain-recurrent.
%
%Thurston in  \cite[p.\, 25]{ThM} considered the space of chain-recurrent laminations equipped with the Hausdorff metric. 
%He showed that  any geodesic lamination that is a Hausdorff limit of chain-recurrent laminations is again chain-recurrent  \cite[Prop. 6.2]{ThM}, and he used this to describe the set $C(\lambda)$ which is defined to be the intersection of  the Hausdorff closures  of  neighbourhoods in $\ml(S)$ of a measured lamination $\lambda$. 
%In the case where $\lambda$ is maximal as a geodesic lamination, $C(\lambda)$ coincides with  $\{\lambda\}$, but in the general case  $C(\lambda)$ consists of the chain-recurrent laminations which contain $\lambda$. 
A  chain-recurrent geodesic  lamination $\mu$
is said to be \emph{maximal} if there is no chain-recurrent geodesic lamination on $S$ properly containing it.

We shall also use the following notion. On a closed hyperbolic surface $S$, a geodesic lamination $\lambda$ is said to be \emph{complete} if every connected component of $S \setminus \lambda$ is an ideal triangle.

The following fact is mentioned by Thurston in \cite{ThM}, and we provide a proof here.
\begin{lemma}
On a closed orientable hyperbolic surface $S$, any maximal chain-recurrent geodesic lamination $\lambda$  is complete.
\end{lemma}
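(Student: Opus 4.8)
The plan is to establish the slightly stronger statement that every chain-recurrent geodesic lamination on $S$ is contained in a \emph{complete} chain-recurrent geodesic lamination. The lemma then follows at once: a maximal chain-recurrent lamination $\lambda$ must equal any chain-recurrent lamination containing it, so it equals the complete lamination furnished by this statement, and is therefore complete.

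To prove the stronger statement, let $\lambda$ be chain-recurrent, so that $\lambda=\lim_n\gamma_n$ in the Hausdorff topology for some disjoint unions $\gamma_n$ of simple closed geodesics. Since a geodesic is the unique geodesic in its isotopy class, the components of each $\gamma_n$ are pairwise non-isotopic, so $\gamma_n$ extends to a pants decomposition $\beta_n\supseteq\gamma_n$. The heart of the matter is the following sub-claim: \emph{every pants decomposition $\beta$ of $S$ is contained in a complete chain-recurrent geodesic lamination.} Granting it, pick complete chain-recurrent $\hat\beta_n\supseteq\beta_n\supseteq\gamma_n$. Since the space of geodesic laminations on $S$ is Hausdorff-compact, after passing to a subsequence $\hat\beta_n\to\mu$. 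Hausdorff convergence respects inclusion --- every point of $\lambda=\lim\gamma_n$ is a limit of points of $\gamma_n\subseteq\hat\beta_n$, hence lies in $\mu$ --- so $\mu\supseteq\lambda$; and a Hausdorff limit of chain-recurrent laminations is chain-recurrent, so $\mu$ is chain-recurrent. Thus $\mu$ is complete, chain-recurrent, and contains $\lambda$, as wanted. (When $\lambda$ is maximal chain-recurrent one even gets $\hat\beta_n\to\lambda$, giving an alternative to invoking the completeness of $\mu$: if some complementary region $R$ of $\lambda$ were not an ideal triangle, then for large $n$ every leaf of the complete lamination $\hat\beta_n$ meeting $R$ would lie near $\partial R$, forcing the part of $S\setminus\hat\beta_n$ deep inside $R$ to be a single complementary region close to $R$, which cannot be an ideal triangle.)

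For the sub-claim, I would complete $\beta$ to a complete lamination $\hat\beta=\beta\cup\Sigma$ in the standard way, inserting in each pair of pants of $S\setminus\beta$ three bi-infinite ``seam'' leaves, each joining two cuffs and spiralling onto them, with the spiralling directions chosen so that the two seams abutting a cuff within each pair of pants spiral onto it consistently; then $S\setminus\hat\beta$ is a union of $4g-4$ ideal triangles. To see that $\hat\beta$ is chain-recurrent, I would fix a multicurve $\eta$ in minimal position with $\beta$ whose arcs in each pair of pants realise, rel $\beta$, the three seam classes, and let $T$ be the product of the left Dehn twists about the components of $\beta$. Then each $T^n(\eta)$ is a multicurve, and as $n\to\infty$ its geodesic representative winds ever more tightly about the curves of $\beta$ while the remaining arcs converge to the seam leaves, so $T^n(\eta)\to\hat\beta$ in the Hausdorff topology.

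The main obstacle is the verification, inside the sub-claim, that the curves $T^n(\eta)$ are actually embedded and that their Hausdorff limit is exactly $\hat\beta$: one must check that no self-intersection arises where a curve ``turns around'' near a cuff (this is what forces the compatibility of spiralling directions, a finite combinatorial choice consistent with using only left twists), and that the three seam classes in every pair of pants can be simultaneously carried by the arcs of one closed multicurve matching up correctly across the cuffs. These checks are routine but fiddly, and are cleanest to organise by fixing a train track carrying $\beta$ together with the seam arcs and realising the $T^n(\eta)$ as curves carried by it; the remaining ingredients --- compactness of the space of geodesic laminations and stability of chain-recurrence under Hausdorff limits --- are standard. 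A more hands-on alternative, bypassing the sub-claim, is to take a complementary region $\hat P$ of $\lambda$ that is not an ideal triangle and enlarge $\lambda$ directly: if $\hat P$ contains an essential non-peripheral simple closed geodesic $\delta$, then $\lambda\cup\delta$ is chain-recurrent because for large $n$ the approximants $\gamma_n$ of $\lambda$ lie in a neighbourhood of $\lambda$ missing the fixed geodesic $\delta$, so $\gamma_n\cup\delta$ is a multicurve converging to $\lambda\cup\delta$; otherwise $\hat P$ is planar with only peripheral simple closed curves and one adds a suitable diagonal or spiralling leaf, arguing again by a spiralling approximation --- either way $\lambda$ fails to be maximal.
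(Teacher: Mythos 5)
Your proof follows essentially the same route as the paper's: approximate $\lambda$ by multicurves, extend these to pants decompositions, complete them to chain-recurrent laminations whose complementary regions are all ideal triangles by adding spiralling leaves, take a Hausdorff limit (which is chain-recurrent, complete, and contains $\lambda$), and conclude by maximality. The only difference is one of detail: the paper simply asserts that the spiralling completion can be chosen chain-recurrent (with the leaves spiralling in opposite directions on the two sides of each cuff), whereas you justify this sub-claim via iterated Dehn twists of a transversal multicurve --- a correct and standard verification of the step the paper leaves implicit.
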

\begin{proof}
Since $\lambda$ is chain-recurrent, there is a sequence of disjoint unions of simple closed geodesics $\gamma_i$ converging to $\lambda$ in the Hausdorff topology.
Let us extend such a family $\gamma_i$ to a geodesic pants decomposition $\gamma_i'$. 
Adding non-compact isolated leaves  in such a way that on the two sides of each component of $\gamma'_i$ the isolated leaves spiral in opposite directions, we further extend $\gamma_i'$ to a chain-recurrent geodesic lamination $\mu_i$ whose complementary components are all ideal triangles. The space of non-empty compact subsets of $S$ equipped with the Hausdorff metric is compact. Therefore, the sequence $\mu_i$ has a convergent subsequence. Let $\lambda'$ be its Hausdorff limit.
Then $\lambda'$ is chain-recurrent, and each of its complementary components is an ideal triangle.
Since for each $i$, $\mu_i$ contains $\gamma_i$, the limit $\lambda'$ contains $\lambda$.
By the maximality of $\lambda$, we have $\lambda'=\lambda$, hence each complementary components of $\lambda$ is an ideal triangle.
%Let $\lambda$ be a maximal chain-recurrent lamination.
%We can take a sequence of  train tracks $\tau_i$ carrying $\lambda$ such that every branch is a geodesic arc and carries some leaf of $\lambda$  and $\{\tau_i\}$ converges to $\lambda$ in the Hausdorff topology.
%
%If $\tau_i$ is not maximal recurrent train track, it is a sub-train track of a maximal recurrent train track $\tau_i'$ such that each branch of $\tau_i' \setminus \tau_i$ is a geodesic whose length goes to $\infty$.
%Then $\{\tau_i'\}$ also converges to a geodesic lamination $\mu$ containing $\lambda$.
%Since $\tau_i'$ is  recurrent, there is a closed geodesic $\gamma_i$ carried by $\tau_i'$, and hence $\{\gamma_i\}$ also converges to $\mu$.
%Therefore $\mu$ is also chain-recurrent, and by the assumption of maximality of $\lambda$, we have $\mu=\lambda$.
%Since $\tau'_i$ is a maximal  recurrent train track, every component of $S \setminus \tau'_i$ is a triangle.
%It follows that every complementary region of their limit $\mu=\lambda$ is an ideal triangle.
\end{proof}

\begin{notation}  
Given a chain-recurrent geodesic lamination $\mu$, we denote by $\mathrm{MCR}(\mu)$ the set of maximal chain-recurrent geodesic laminations containing $\mu$.
In the case when $\mu$ is not chain-recurrent, we use the same notation, $\mathrm{MCR}(\mu)$, to mean $\mathrm{MCR}(\mu^\CR)$.

\end{notation}
We denote the set of all maximal chain-recurrent geodesic laminations on $S$ by $\mcr$.

Since Hausdorff limits of maximal geodesic laminations are maximal and since Hausdorff limits of chain-recurrent geodesic laminations are chain-recurrent, we have the following.

\begin{lemma}
\label{Hausdorff closed}
For any geodesic lamination $\mu$, the set $\mcr(\mu)$ is closed in the Hausdorff topology.
\end{lemma}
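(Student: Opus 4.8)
The plan is to assemble the two facts recalled just before the statement --- that a Hausdorff limit of chain-recurrent geodesic laminations is chain-recurrent, and that a Hausdorff limit of maximal geodesic laminations is a maximal geodesic lamination --- together with the (unnumbered) lemma above asserting that a maximal chain-recurrent lamination is complete. Since the space of non-empty compact subsets of $S$ with the Hausdorff metric is a compact metric space, it suffices to show that $\mcr(\mu)$ is sequentially closed. So I would fix a sequence $\lambda_i \in \mcr(\mu)$ converging in the Hausdorff topology to a compact set $\nu$ and prove $\nu \in \mcr(\mu)$. By the defining convention $\mcr(\mu) = \mcr(\mu^\CR)$, and $\mu^\CR$ is chain-recurrent, so I may assume from the outset that $\mu$ itself is chain-recurrent; then every $\lambda_i$ is a maximal chain-recurrent lamination containing $\mu$.

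Next I would verify three properties of the limit $\nu$. First, $\nu$ is complete: each $\lambda_i$ is maximal chain-recurrent, hence complete by the lemma above, and a complete lamination is a geodesic lamination that is maximal with respect to inclusion among all geodesic laminations; by the recalled fact, $\nu$ is again a maximal geodesic lamination, i.e.\ complete. Second, $\nu$ is chain-recurrent: this is immediate from the other recalled fact, since each $\lambda_i$ is chain-recurrent. Third, $\nu$ contains $\mu$: since $\mu \subseteq \lambda_i$ for all $i$ and $\mu$ is a fixed closed set, for every $\varepsilon > 0$ one has $\mu \subseteq \lambda_i \subseteq N_\varepsilon(\nu)$ once $i$ is large, where $N_\varepsilon(\nu)$ denotes the $\varepsilon$-neighbourhood of $\nu$ in $S$; letting $\varepsilon \to 0$ and using that $\nu$ is closed gives $\mu \subseteq \nu$.

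Finally I would combine these observations: a complete chain-recurrent lamination is automatically a \emph{maximal} chain-recurrent lamination, because if $\nu \subseteq \nu'$ with $\nu'$ chain-recurrent --- hence in particular a geodesic lamination --- then maximality of $\nu$ among geodesic laminations forces $\nu = \nu'$. Together with $\mu \subseteq \nu$ this says precisely that $\nu \in \mcr(\mu)$, which proves that $\mcr(\mu)$ is closed.

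There is essentially no deep obstacle here; the argument is a bookkeeping of the two recalled facts and the preceding lemma. The one point deserving a line of justification is the identification of ``complete'' with ``maximal among all geodesic laminations'' (so that the quoted fact about Hausdorff limits of maximal laminations applies): this follows from the observation that a bi-infinite simple geodesic disjoint from a complete lamination $\nu$ must lie in a single complementary region of $\nu$, the closure of which is an ideal triangle, and hence it coincides with one of the three boundary leaves of that triangle and already belongs to $\nu$. One should also take the mild precaution of performing the reduction to chain-recurrent $\mu$ first, so that the containment $\mu \subseteq \nu$ and the unwinding of the $\mcr$-notation are unambiguous.
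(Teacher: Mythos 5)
Your proof is correct and follows exactly the route the paper intends: the paper states the lemma as an immediate consequence of the two facts recalled just before it (Hausdorff limits of maximal laminations are maximal, Hausdorff limits of chain-recurrent laminations are chain-recurrent), and your argument is simply a careful bookkeeping of those facts together with the containment $\mu\subseteq\nu$ and the identification of completeness with maximality among all geodesic laminations. Nothing is missing.
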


A {\em measured lamination} is a geodesic lamination endowed with a transverse measure of full support.
Given two measured laminations $\lambda$ and $\mu$, their intersection number $i(\lambda, \mu)$ is defined to be the total measure $\int_S d\lambda d\mu$, where $d\lambda$ and $d\mu$ denote the densities of their transverse measures.

Given a measured lamination $\lambda$, we define an associated {\em minimal supporting surface} $\Sigma(\lambda)$. This is a subsurface with non-contractible boundary components which contains $\lambda$ and which  is minimal with respect to inclusion up to isotopy among such surfaces.
We note that if $\Sigma(\lambda)$ is a minimal supporting surface of $\lambda$, then every simple closed curve $d$ with $i(\lambda , d)>0$ must intersect the boundary of $\Sigma(\lambda)$ essentially.

\subsection{Maximal ratio-maximising laminations}
We now recall the notion of maximal ratio-maximising laminations. This notion first appeared in Thurston's paper \cite[p. 37]{ThM}.
For any two points $x, y\in \teich(S)$, also regarded as hyperbolic structures on $S$, a chain-recurrent geodesic lamination $\lambda$ on $(S,x)$ is said to be \emph{ratio-maximising} from $x$ to $y$ if there is a Lipschitz homeomorphism $g\colon S \to S$ which is homotopic to the identity with Lipschitz constant equal to $\dth(x,y)$ and which stretches the leaves of $\lambda$  by precisely the factor $\exp(\dth(x,y))$.
By \cite[Theorem 8.5]{ThM}, there exists a sequence of simple closed geodesics $c_i$ on $S$ such that $\dth(x,y)$ is equal to  $\displaystyle \lim_{i\to \infty}\log\frac{\len_y(c_i)}{\len_x(c_i)}$, hence the Hausdorff limit of (a subsequence of) $(c_i)$ is ratio-maximising from $x$ to $y$.
Thurston proved that there is a unique maximal ratio-maximising chain-recurrent geodesic lamination from $x$ to $y$ (see \cite[Theorem 8.2]{ThM}), \ie a chain-recurrent geodesic lamination containing every ratio-maximising lamination from $x$ to $y$.
 Following Thurston, we denote the maximal ratio-maximising chain-recurrent geodesic lamination from $x$ to $y$ by $\mu(x,y)$.

Furthermore, Thurston showed that the following continuity property of maximal ratio-maximising laminations holds.
\begin{theorem}[Theorem 8.4 in \cite{ThM}]
\label{continuity}
Let $(x_i)$ and $(y_i)$ be two sequences in $\teich(S)$ converging to two distinct points $x$ and $y$ respectively.
Then $\mu(x,y)$ contains any Hausdorff limit of a subsequence of $(\mu(x_i, y_i))$.
\end{theorem}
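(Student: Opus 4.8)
The plan is to prove that any Hausdorff-subsequential limit $\nu$ of $\{\mu(x_i,y_i)\}$ is itself a ratio-maximising lamination from $x$ to $y$; since $\mu(x,y)$ contains every ratio-maximising lamination from $x$ to $y$, this gives $\nu\subseteq\mu(x,y)$, which is the assertion. Set $\Lambda_i=\exp\dth(x_i,y_i)$ and $\Lambda=\exp\dth(x,y)$; as Thurston's metric is a continuous Finsler metric and $x\neq y$, we have $\Lambda_i\to\Lambda>1$. After passing to the subsequence realising the limit we have $\mu(x_i,y_i)\to\nu$ in the Hausdorff topology, and as a Hausdorff limit of chain-recurrent laminations $\nu$ is chain-recurrent. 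It thus remains to produce a Lipschitz map $h\colon(S,x)\to(S,y)$, homotopic to $f_y\circ f_x^{-1}$, with $\lip(h)=\Lambda$, that maps every leaf of $\nu$ onto a geodesic of $(S,y)$ with length multiplied by $\Lambda$.

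I would construct $h$ as a limit of extremal maps. For each $i$, the fact that $\mu(x_i,y_i)$ is ratio-maximising from $x_i$ to $y_i$ furnishes a map $g_i\colon(S,x_i)\to(S,y_i)$, homotopic to $f_{y_i}\circ f_{x_i}^{-1}$, with $\lip(g_i)=\Lambda_i$, carrying each leaf of $\mu(x_i,y_i)$ onto a geodesic of $(S,y_i)$ with length scaled by $\Lambda_i$ (this leafwise behaviour is what ``stretching the leaves'' means for stretch maps, hence for the extremal maps assembled from them). Using $x_i\to x$ and $y_i\to y$ in $\teich(S)$, fix homeomorphisms $\phi_i\colon(S,x)\to(S,x_i)$ and $\psi_i\colon(S,y)\to(S,y_i)$ in the relevant mapping classes with bi-Lipschitz constants tending to $1$, chosen so that $\phi_i$ sends the $x$-geodesic representative of $\phi_i^{-1}(\mu(x_i,y_i))$ onto $\mu(x_i,y_i)$, and put $h_i:=\psi_i^{-1}\circ g_i\circ\phi_i$. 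All the $h_i$ lie in the homotopy class of $f_y\circ f_x^{-1}$ and are uniformly Lipschitz, hence equicontinuous, so by Arzelà--Ascoli a subsequence converges uniformly to a map $h\colon(S,x)\to(S,y)$ in that class, with $\lip(h)\le\liminf\lip(h_i)=\Lambda$; since $h$ is homotopic to $f_y\circ f_x^{-1}$ one also has $\lip(h)\ge\Lambda$, so $\lip(h)=\Lambda$.

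Then I would transfer the leafwise stretch to $h$. Fix a leaf $\ell$ of $\nu$, a point $p\in\ell$, and a subarc $\alpha\subset\ell$ about $p$ short enough to be distance-realising in $(S,x)$. Hausdorff convergence of $\mu(x_i,y_i)$ to $\nu$ together with $\phi_i\to\mathrm{id}$ yields subarcs $\alpha_i$ of leaves of $\phi_i^{-1}(\mu(x_i,y_i))$ converging to $\alpha$, with endpoints $p_i',p_i''\to p',p''$ (the endpoints of $\alpha$) and distance-realising in $(S,x_i)$ for $i$ large. Since $g_i$ maps the leaf of $\mu(x_i,y_i)$ through $\phi_i(\alpha_i)$ onto a geodesic scaled by $\Lambda_i$, the arc $g_i\phi_i(\alpha_i)$ is a geodesic of $(S,y_i)$ and
\[
 d_{y_i}\big(g_i\phi_i(p_i'),\,g_i\phi_i(p_i'')\big)=\len_{y_i}\big(g_i\phi_i(\alpha_i)\big)=\Lambda_i\,\len_{x_i}\big(\phi_i(\alpha_i)\big).
\]
Letting $i\to\infty$, using $h_i\to h$ uniformly and the bi-Lipschitz constants of $\phi_i,\psi_i$ tending to $1$, the left-hand side tends to $d_y(h(p'),h(p''))$ and the right-hand side to $\Lambda\,\len_x(\alpha)=\Lambda\,d_x(p',p'')$, so $d_y(h(p'),h(p''))=\Lambda\,d_x(p',p'')$. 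Combined with $\lip(h)=\Lambda$ this forces $\len_y(h(\alpha))=\Lambda\,\len_x(\alpha)$ and that $h(\alpha)$ is geodesic; since $p\in\ell$ was arbitrary and arc length is additive under concatenation, $h$ maps every leaf of $\nu$ onto a geodesic of $(S,y)$ with length scaled by $\Lambda$. Hence $\nu$ is ratio-maximising from $x$ to $y$, and therefore $\nu\subseteq\mu(x,y)$.

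I expect the main obstacle to lie in the third step: arranging the two limiting processes — the Hausdorff convergence of the straightened laminations $\phi_i^{-1}(\mu(x_i,y_i))$ toward $\nu$ and the uniform convergence $h_i\to h$ — so that the \emph{exact} leafwise stretch factor (not merely an inequality) survives in the limit. What makes this go through is that the extremal maps carry the maximally stretched leaves to geodesics, so the stretch factor can be recovered from distances between endpoints, which pass well to uniform limits, rather than from arc lengths of image curves, which are only lower semicontinuous.
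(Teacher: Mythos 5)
The paper does not prove this statement: it is quoted verbatim as Theorem~8.4 of Thurston's paper \cite{ThM}, so there is no in-paper proof to compare against. Judged on its own, your argument is essentially Thurston's original one and I see no genuine gap. The reduction is the right one: since $\mu(x,y)$ is by definition the unique chain-recurrent lamination containing \emph{every} ratio-maximising lamination from $x$ to $y$, it suffices to show that a Hausdorff-subsequential limit $\nu$ of $\mu(x_i,y_i)$ is ratio-maximising from $x$ to $y$, i.e.\ to produce an optimal Lipschitz map in the class of $f_y\circ f_x^{-1}$ stretching the leaves of $\nu$ by $\Lambda=\exp\dth(x,y)$. Your construction of $h$ as an Arzel\`a--Ascoli limit of the conjugated extremal maps $h_i=\psi_i^{-1}\circ g_i\circ\phi_i$ is correct (lower semicontinuity of the Lipschitz constant under uniform convergence gives $\lip(h)\le\Lambda$, the homotopy class gives the reverse inequality), and the point you rightly identify as delicate --- that only an inequality on image arc lengths survives a uniform limit --- is handled correctly by recovering the stretch factor from distances between the endpoints of short distance-realising subarcs, which do pass to the limit, and then forcing equality against $\lip(h)=\Lambda$. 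The remaining steps (Hausdorff limits of chain-recurrent laminations are chain-recurrent, approximating subarcs $\alpha_i\to\alpha$ exist by Hausdorff convergence, additivity over a leaf) are standard. One small remark: the paper's definition of ratio-maximising says ``Lipschitz constant equal to $\dth(x,y)$,'' which is a typo for $\exp(\dth(x,y))$; you use the correct normalisation throughout.
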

This theorem has been generalised by Pan and Wolf in \cite{PW2} to the continuity of the \lq envelope', \ie the union of the geodesics connecting two endpoints, as its endpoints vary.

\subsection{Stretch maps and stretch vectors}
\label{Thurston's stretch}
In the same paper \cite{ThM}, Thurston introduced the notion of stretch map, which we shall use now.
Let $S$ be a closed orientable surface with a hyperbolic structure $m_0$ and $\lambda$ a maximal geodesic lamination on $(S, m_0)$.
Then for  $t \in [0,\infty)$, there exist continuous families of  hyperbolic structures $m_t$ on $S$ and of Lipschitz homeomorphisms $f_t: (S, m_0) \to (S, m_t)$ which are homotopic to the identity and called {\em stretch maps along $\lambda$}, such that for each $t \in [0,\infty)$, the Lipschitz constant of $f_t$ is equal to $e^t$ and is realised along the leaves of $\lambda$.
Thurston also showed in \cite{ThM} that if we regard $\str_{m_0}(\lambda,t):=(S,m_t)$ as a point of $\teich(S)$, then the ray $\{\str_{m_0}(\lambda, t)\}$, for $t$ varying in $[0,\infty)$, is a geodesic ray with respect to $\dth$, that is, $\dth(\str_{m_0}(\lambda, s), \str_{m_0}(\lambda,t))=t-s$ for all $s< t \in [0,\infty)$.
Such a ray is called a {\em stretch ray} along $\lambda$, and any of its subarcs is called a {\em stretch path}.
It is not always the case that two points in $\teich(S)$ can be connected by a stretch path, but Thurston showed that any  two points can be connected by a geodesic path which is a finite concatenation of stretch paths.

Thurston gave a description of this fact using ratio-maximising chain-recurrent geodesic laminations. For the convenience of the reader, we briefly recall Thurston's argument (see  \cite[Theorem 8.5]{ThM}).
Let $x, y$ be two distinct points in $\teich(S)$ and let $\mu(x,y)$ be the maximal ratio-maximising chain-recurrent geodesic lamination from $x$ to $y$.
Take any maximal chain-recurrent geodesic lamination $\lambda_1$ containing $\mu(x,y)$, and consider the stretch ray $\str_x(\lambda_1, t)$.
By \cref{continuity}, for any small neighbourhood $U$ of $\mu(x,y)$,  and for sufficiently small $t$, the maximal ratio-maximising lamination $\mu(\str_x(\lambda_1,t),y)$ is contained in $U$.
On the other hand,  for sufficiently small $t$, there exists a Lipschitz map from a small neighbourhood of $\lambda_1$ on $\str_x(\lambda_1,t),y)$ of Lipschitz constant $e^{d_\th(x,y)-t}$.
This implies that $\mu(\str_x(\lambda_1,t),y)$ is contained in $\lambda_1$ for sufficiently small $t$.
By \cref{continuity} again, for sufficiently small $t$, the lamination $\mu(\str_x(\lambda_1,t), y)$ cannot be larger than $\mu(x,y)$, hence it coincides with $\mu(x,y)$.
If $\str_x(\lambda_1,t)$ reaches $y$, then we are done.
Otherwise, at some point $t_1$, the lamination $\mu(\str_x(\lambda_1, t_1), y)$ becomes larger than $\mu(x,y)$ for the first time. Letting $m_1$ be $\str_x(\lambda_1, t_1)$ and $\lambda_2$  a maximal chain-recurrent geodesic lamination containing  $\mu(m_1, y)$, we consider a new stretch ray $\str_{m_1}(\lambda_2, t)$.
As before, $\mu(\str_{m_1}(\lambda_2, t),y)$ coincides with $\mu(m_1, y)$ for sufficiently small $t$, and  unless $\str_{m_1}(\lambda_1,t)$ reaches $y$, we can find $t_2$ such that $\mu(\str_{m_1}(\lambda_2, t_2),y)$ becomes larger than $\mu(m_1, y)$ for the first time.
We let $m_2$ be $\str_{m_1}(\lambda_2, t_2)$. Choosing a maximal chain-recurrent geodesic lamination $\lambda_2$ containing $\mu(m_2, y)$, we repeat the same procedure.

Thus, we have an increasing sequence of chain-recurrent geodesic laminations $\mu(x,y) \subsetneq \mu(m_1,y) \subsetneq \mu(m_2,y) \subsetneq \dots$.
Since the length of such a sequence is bounded by a number depending only on the genus of $S$, after a finite number of steps, we reach the situation where $\mu(m_j, y)$ is maximal and $\str_{m_j}(m_j, \mu(m_j,y))$ joins $m_j$ to $y$.
Thus, we have shown the following.

\begin{theorem}[Thurston]
\label{concatenation}
Any two points $x, y$ in $(\teich(S), \dth)$ can be joined by a geodesic consisting of a concatenation of stretch paths along maximal chain-recurrent geodesic laminations containing $\mu(x,y)$,  with the number of stretch paths in such a concatenation bounded by a constant depending only on the genus of $S$.
If $\mu(x,y)$ is maximal, then $x$ and $y$ can be joined by a single stretch path.
\end{theorem}

Given a stretch ray $\str_x(\lambda,t)$ along a maximal geodesic lamination $\lambda$ starting at $x\in \teich(S)$, we consider the tangent vector $\displaystyle\left.\frac{d\str_x(\lambda,t)}{dt}\right\vert_{t=0}$ at $x$.
This is called the {\em stretch vector} along $\lambda$, and we denote it by $\vst_x(\lambda)$.
It is easy to verify that $\Vert \vst_x(\lambda)\Vert_\th=1$. Not all unit tangent vectors are stretch vectors but tangent vectors are, in a very organised way, combinations of stretch vectors; see our \cref{face} below, where this organisation corresponds to the decomposition of the unit sphere into faces.

\subsection{Cataclysms}
The notion of cataclysm was introduced by Thurston in \cite[\S9]{ThM}.
Let $\lambda$ be a maximal geodesic lamination on $S$.
Given a hyperbolic metric $m$ on $S$, the lamination $\lambda$ defines a decomposition of $(S,m)$ into ideal triangles.
In each ideal triangle, there is a unique maximal (partial) foliation whose leaves are pieces of horocycles centred at the three ideal vertices of the triangle, which is symmetric with respect to the permutation of the ideal vertices and which carries a transverse measure induced by the distance function on the sides of the ideal triangle.
This partial measured foliation can be deformed into an ordinary measured foliation with a 3-prong singularity so that it fills the entire ideal triangle. See Thurston's picture in \cite[Fig. 2]{ThM}.
By pasting together these measured foliations supported in the ideal triangle, we get a measured foliation on $S$ in the usual dense, which is well defined up to isotopy, which we denote by $F_m(\lambda)$, and which we call the horocyclic foliation associated with $\lambda$ on $(S,m)$.
We denote by $\MF(S)$ the space of measured foliations on $S$ modulo Whitehead equivalence, equipped with the usual Thurston topology, that is, the topology induced by the weak topology on transverse measures.

Thurston proves in \cite[\S 9]{ThM} that for any measured foliation $F$ transverse to $\lambda$, there is a unique point $m$ in $\teich(S)$ such that $F_m(\lambda)=F$, up to Whitehead equivalence.
This gives rise to the so-called cataclysm homeomorphism $\cat \colon U \to \teich(S)$, where $U$ consists of pairs $(\lambda, F)$ in $\mcr \times \MF(S)$ such that $F$ is transverse to $\lambda$.

\subsection{The piecewise linear structure of measured foliation space and the tangent space to this space}

The space $\MF(S)$, which is known to be homeomorphic to $\reals^{6g-6} \setminus \{\mathbf 0\}$, has a piecewise smooth structure defined by train tracks.
Each chart provided by a maximal and recurrent train track is a cone in some finite-dimensional real vector space and the collection of charts form an atlas whose transition functions are piecewise linear.

In \cite[\S 6]{ThM}, Thurston shows that although the tangent space to measured lamination space does not have a natural differential structure, and therefore one cannot talk about a tangent bundle, this tangent space has `fragments' (this is Thurston's terminology) of a linear structure. 
He shows that the cataclysm map is differentiable with respect to the piecewise smooth structure on $\MF(S)$. Its differential is denoted by $d\cat \colon T(U) \to T\teich(S)$.
Each fibre on $(\lambda, F)$ is identified with $\MF(S)$.
For every $\lambda \in \mcr$ and $x \in \teich(S)$, the point $(\lambda, F_x(\lambda)) \in T(U)$ is mapped by $d\cat$ to the stretch vector $\v_x(\lambda)$.

\subsection{Convex geometry}
\label{convex geometry}
In this subsection, we shall recall some notions of convex geometry which will be used in the rest of this paper.
We shall always denote by $\mathcal B$ a convex ball with nonempty interior in some finite-dimensional vector space $V$ over $\reals$. The boundary of $\mathcal B$, denoted by $\mathcal S$, is homeomorphic to a sphere and is called a convex sphere.
We shall study the convex geometry of the unit balls and the unit spheres in the tangent spaces of Teichm\"{u}ller space equipped with the norm induced by Thurston's metric.

\begin{definition}[Face of a convex set]
A proper convex subset $F$ of the convex sphere $\mathcal B$ is said to be a {\em face} of $\mathcal S$ if  for any $v \in F$ and for any linear combination $v = tu+(1-t)w$, with $u, w \in \mathcal B$ and $t\in (0,1)$, we have $u, w \in F$.
\end{definition}

There is a special kind of face, which is called an exposed face.

\begin{definition}[Support hyperplane]
A {\em support hyperplane} for $\mathcal S$ is a codimension-one affine subspace $H$ of $V$ which intersects $\mathcal S$ but is disjoint from the interior of $\mathcal B$.
\end{definition}

\begin{definition}[Exposed face]
A subset $F$ of $\mathcal S$ is called an {\em exposed face} when $F$ is the intersection of $\mathcal S$ and a hyperplane $H$.
\end{definition}

It is easy to check that every exposed face is a face.
The converse is not true.
For instance, when $\mathcal B$ is a figure consisting of the union of a half-disc of diameter one and a square in $\reals^2$ of side one, glued along the diameter of the half-disc and a side of the square,
then an extremity of the resulting figure which is point at which the semi-circle intersects the square is a face, but it is not exposed. 

Another notion which is important in the rest of this paper is the following.
\begin{definition}
A point $v$ on $\mathcal S$ is said to be an {\em extreme point} if whenever  $v=tu +(1-t)w$ for two points $u, v \in \mathcal S$ and $t \in [0,1]$,  we have either $t=0$ or $t=1$.
\end{definition}
%The notions of stretch vectors in $\mathcal S_x$ along maximal chain-recurrent geodesic laminations and extreme points of $\mathcal S_x$ are tied together by a conjecture we made in \cite{Infinitesimal} (Conjecture 1.13) saying that the extreme points of $\mathcal S_x$ are exactly the 
%stretch vectors in $\mathcal S_x$  that are tangent to
%stretch paths with respect to maximal chain-recurrent laminations (see \cref{extreme} below).

\section{Harmonic stretch maps and linear combinations of stretch vectors}
\subsection{Results on harmonic stretch maps and harmonic stretch vectors}
As we remarked in the previous section, it is not true that two arbitrary points in $\teich(S)$ can be joined by a stretch path.
Pan and Wolf in \cite{PW} considered a generalisation of stretch maps/paths, which they call harmonic stretch maps/paths. They showed that any two points in Teichm\"uller space can be joined by a harmonic stretch path.
We state their results without formally defining harmonic stretch maps.

\begin{theorem}[Pan--Wolf \cite{PW}]
\label{PW}
Let $\lambda$ be a  geodesic lamination on $(S,m_0)$.
Then there exists a family of Lipschitz homeomorphisms $h_t \colon (S, m_0) \to (S, m_t)$, $t\geq 0$,  called harmonic stretch maps, such that for any $t >0$, the map  $h_t$ stretches $\lambda$ by the factor $e^t$ and the restriction map $h_t | \big( (S, m_0) \setminus \lambda\big)$, whose image is equal to $ \big((S, m_t) \setminus \lambda\big)$,  has Lipschitz constant strictly less than $e^t$.
Furthermore, the family $\xi(t):=(S, m_t)$ is a geodesic ray with respect to $\dth$.
\end{theorem}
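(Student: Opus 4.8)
The plan is to build the maps $h_t$ by a variational (harmonic-map) procedure, so that the family $\xi$ is \emph{canonical} and the three asserted properties fall out of the maximum principle for harmonic maps together with Thurston's length formula \cref{scc}. First recall the extremal case already understood by Thurston: if $\lambda$ is maximal, hence complete, then $S\setminus\lambda$ is a disjoint union of ideal triangles, which carry no moduli, and one may simply take $h_t$ to be Thurston's stretch map $f_t$ along $\lambda$; then $f_t$ multiplies arc length along $\lambda$ by $e^t$, is essentially $1$-Lipschitz in the transverse direction on each ideal triangle (so its distortion drops below $e^t$ off $\lambda$), and $\{\str_{m_0}(\lambda,t)\}$ is a stretch ray. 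The obstruction for a general geodesic lamination $\lambda$ is that the components of $S\setminus\lambda$ may be ideal polygons with more than three sides, crowns, or non-compact pieces with cusps; these carry moduli, so one must \emph{choose} how to deform them. Extending $\lambda$ to a maximal lamination and applying Thurston's construction does produce a ray, but then the added leaves are stretched by the full factor $e^t$, so the restriction to $S\setminus\lambda$ no longer has Lipschitz constant strictly below $e^t$, and the ray depends on the arbitrary completion. What is needed is a canonical extension of the deformation across the complementary regions.

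Following Pan and Wolf, I would pin this extension down variationally. Near $\lambda$ the deformation that stretches $\lambda$ by $e^t$ is essentially forced — governed, as in Thurston's cataclysm picture, by the transverse structure along $\lambda$ — so one prescribes it as boundary behaviour and, on each component of $S\setminus\lambda$, seeks the energy-minimiser in the appropriate class of maps; equivalently, a harmonic map with the prescribed asymptotics along the ideal boundary leaves. Since these components are non-compact with ideal vertices, the Dirichlet energy is infinite, and one works instead with a renormalised (relative) energy measured against a fixed reference map; the harmonic-map theory for maps between finite-area hyperbolic surfaces — and, at the degenerate end, for harmonic maps to $\reals$-trees — then yields a unique minimiser on each piece. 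Gluing these to the forced deformation near $\lambda$ produces $h_t\colon (S,m_0)\to(S,m_t)$, and one checks that $t\mapsto m_t$ is well-defined and continuous.

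One would then verify the three properties as follows. That $h_t$ multiplies arc length along $\lambda$ by $e^t$ is built into the construction. On the complementary pieces $h_t$ is energy-minimising, so the maximum principle for the distortion of energy-minimising maps between hyperbolic surfaces keeps that distortion strictly below $e^t$ away from $\lambda$, which is the strict Lipschitz bound on the restriction $h_t|_{S\setminus\lambda}$ asserted in the statement; together with the stretch of $\lambda$ this gives $\lip(h_t)=e^t$, hence $\dth(m_0,m_t)\le t$. Conversely, $h_t$ also stretches the nonempty chain-recurrent sublamination $\lambda^\CR\subseteq\lambda$ by $e^t$, and $\lambda^\CR$ is by definition a Hausdorff limit of disjoint unions of simple closed geodesics; feeding those curves into \cref{scc} gives $\dth(m_0,m_t)\ge t$. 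Thus $\dth(m_0,m_t)=t$ and $\lambda$ is ratio-maximising from $m_0$ to $m_t$. Finally, uniqueness of the harmonic minimiser yields a semigroup property — the harmonic stretch map from $m_s$ that stretches $\lambda$ by $e^{t-s}$ lands at $m_t$ and is itself such a ray issuing from $m_s$ — so that $\dth(m_s,m_t)=t-s$ for all $0\le s\le t$, \ie $\xi$ is a geodesic ray.

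The main obstacle is analytic and is concentrated in the second step: formulating a well-posed renormalised energy-minimisation problem on the non-compact complementary regions with the correct boundary data at the ideal leaves, and proving existence, uniqueness and enough regularity up to the ideal boundary for the minimiser. Equally delicate is upgrading the maximum-principle bound from ``distortion $\le e^t$'' to the strict inequality off $\lambda$ that the statement requires, which calls for the sharp Bochner estimate for harmonic maps between hyperbolic surfaces together with control of $h_t$ near the ideal vertices. For these points I would follow Pan and Wolf's analysis, which rests on Wolf's earlier study of harmonic maps and the Thurston compactification.
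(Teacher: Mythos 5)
You should first note that the paper does not prove this statement at all: \cref{PW} is quoted verbatim as a theorem of Pan--Wolf \cite{PW}, and the authors explicitly say they ``state their results without formally defining harmonic stretch maps.'' So there is no in-paper argument to compare against, and your proposal has to stand on its own. As written, it does not: every genuinely hard step --- well-posedness of the renormalised energy problem on the non-compact complementary regions, existence, uniqueness and boundary regularity of the minimiser, the strict distortion bound, and the semigroup property --- is explicitly deferred to ``Pan and Wolf's analysis.'' That makes the proposal a plausible road map in the spirit of \cite{PW} (whose actual construction, for what it is worth, proceeds by taking limits of harmonic map rays associated to holomorphic quadratic differentials rather than by gluing fibrewise energy minimisers to a forced deformation near $\lambda$), but not a proof of the theorem.

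There is also one concrete mathematical error in your base case. You claim that when $\lambda$ is maximal one may take $h_t$ to be Thurston's stretch map $f_t$ and that its distortion ``drops below $e^t$ off $\lambda$.'' This is not so: in each complementary ideal triangle the local Lipschitz constant of $f_t$ tends to $e^t$ as one approaches the boundary leaves of the spikes (the map there converges to the boundary behaviour, which is stretching by $e^t$), so the Lipschitz constant of the restriction of $f_t$ to the open set $S\setminus\lambda$ is exactly $e^t$, not strictly less. This is precisely one of the features that distinguishes harmonic stretch maps from Thurston's stretch maps even for complete laminations, and it is why Pan--Wolf's construction is new in that case too; a correct proof cannot start from the premise that the maximal case ``is already understood by Thurston.'' The remaining soft parts of your argument --- the lower bound $\dth(m_0,m_t)\ge t$ obtained by approximating $\lambda^{\CR}$ by multicurves and feeding them into Thurston's length-ratio formula, and the derivation of the geodesic property from a semigroup identity --- are sound in outline, but they all rest on the unproved analytic core.
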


We call a ray such as $\xi(t)$  above a {\em harmonic stretch ray} along $\lambda$ and its subarcs harmonic stretch paths along $\lambda$.
A harmonic stretch ray is differentiable and defines a unit tangent vector at its starting point. This vector is called the {\em harmonic stretch vector} along $\lambda$.
The following is an essential property of harmonic stretch paths.

\begin{theorem}[Theorem 1.6 in Pan--Wolf \cite{PW}]
\label{harmonic geodesic}
For any distinct two points $x, y \in \teich(S)$, there exists a unique harmonic stretch path along the maximal ratio-maximising chain recurrent lamination $\mu(x,y)$, connecting $x$ with $y$.
\end{theorem}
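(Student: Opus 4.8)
The plan is to realise $y$ as the time-$K$ point, $K:=\dth(x,y)$, of the harmonic stretch ray along $\mu:=\mu(x,y)$ starting at $x$. The organising device is a canonical \emph{harmonic} best Lipschitz map $\Phi\colon(S,x)\to(S,y)$ homotopic to the identity with $\lip(\Phi)=e^{K}$: I would show that its maximal stretch locus is exactly $\mu$, and then identify $\Phi$ with the time-$K$ map $h_{K}$ of the harmonic stretch ray along $\mu$ produced by \cref{PW}.

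First I would construct $\Phi$ and study its maximal stretch locus $E$, the set where $\Phi$ expands infinitesimal lengths by the top factor $e^{K}$. A canonical best Lipschitz map can be singled out by a variational/minimality principle (for instance by minimising the size of $E$ among best Lipschitz maps), and one must establish existence and enough regularity for $E$ to be a geodesic lamination. Since $\dth(x,y)$ is attained along $E$, this lamination is ratio-maximising from $x$ to $y$ --- and, as in Thurston's treatment, chain-recurrent --- so $E\subseteq\mu$ by the very definition of $\mu(x,y)$. The essential point is the reverse inclusion $\mu\subseteq E$: here one uses the harmonicity/minimality of $\Phi$ together with \cref{scc} and the description of ratio-maximising laminations as Hausdorff limits of curves $c_i$ realising the ratio $\len_{y}(c_i)/\len_{x}(c_i)\to e^{K}$, to conclude that $E$ is already maximal among laminations stretched by the top factor of a best Lipschitz map. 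Thus $E=\mu$, $\Phi$ stretches $\mu$ by exactly $e^{K}$, and $\Phi$ restricted to $(S,x)\setminus\mu\to(S,y)\setminus\mu$ has Lipschitz constant strictly below $e^{K}$.

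Next I would match $\Phi$ with the harmonic stretch ray along $\mu$. By \cref{PW} there is such a ray $\xi(t)=(S,m_{t})$ with $\xi(0)=x$; it is a $\dth$-geodesic, so $\dth(x,m_{t})=t$, and $h_{K}$ stretches $\mu$ by $e^{K}$ and stretches the complement strictly less. It then remains to prove a uniqueness statement: a best Lipschitz map from $x$ which stretches $\mu$ by exactly $e^{K}$ and stretches $(S,x)\setminus\mu$ by a strictly smaller factor is unique up to isotopy; granting this, $\Phi=h_{K}$, hence $m_{K}=y$, and $\xi|_{[0,K]}$ is the desired harmonic stretch path along $\mu(x,y)$ from $x$ to $y$. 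A consistency check along the way: choosing the $c_i$ above and passing to a subsequence whose Hausdorff limit is a sublamination of $\mu$, one has $\len_{\xi(t)}(c_i)/\len_{x}(c_i)\to e^{t}$ and hence $\len_{y}(c_i)/\len_{\xi(t)}(c_i)\to e^{K-t}$; this forces $\dth(\xi(t),y)\ge K-t$ and shows $\mu$ stays ratio-maximising from $\xi(t)$ to $y$ for all $t\in[0,K]$, which, combined with Thurston's upper semicontinuity of maximal ratio-maximising laminations (\cref{continuity}), pins down $\mu(\xi(t),y)=\mu$ along the whole path.

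The main obstacle is identifying the maximal stretch locus $E$ of the canonical best Lipschitz map with the possibly non-maximal lamination $\mu(x,y)$, together with the uniqueness of a best Lipschitz map with prescribed stretch lamination $\mu$ and strictly sub-maximal behaviour off $\mu$. This is exactly where the harmonic theory improves on Thurston's: classical stretch maps exist only along \emph{maximal} (complete) laminations, which is why \cref{concatenation} only yields a finite concatenation of stretch paths; once $\mu(x,y)$ itself is realised as the stretch locus of a best Lipschitz map sitting inside a harmonic stretch flow, a single harmonic stretch path connects $x$ to $y$.
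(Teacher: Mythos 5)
First, a point of comparison: the paper does not prove this statement at all. \cref{harmonic geodesic} is imported verbatim as Theorem 1.11 of Pan--Wolf \cite{PW} and used as a black box, so there is no in-paper argument to measure your proposal against; the only question is whether your sketch would stand on its own.

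It would not, as written: it is a reasonable outline of the Pan--Wolf strategy, but every step carrying real mathematical weight is deferred. Concretely: (i) you never define what ``harmonic'' means for a Lipschitz map between hyperbolic surfaces, so the existence, regularity and canonicity of $\Phi$, and the claim that its maximal stretch locus $E$ is a geodesic lamination, are all unsupported --- in \cite{PW} this rests on a substantial limiting analysis of harmonic diffeomorphisms, not on a generic ``variational/minimality principle''. (ii) The inclusion $\mu(x,y)\subseteq E$ is the hard direction, and the appeal to ``harmonicity/minimality together with'' the curve-ratio characterisation of $\dth$ is not an argument: an arbitrary optimal Lipschitz map is only guaranteed to stretch \emph{some} ratio-maximising lamination maximally, and showing that one specific map stretches \emph{all} of $\mu(x,y)$ maximally requires genuine input. (iii) The uniqueness statement you propose to ``grant'' --- that a best Lipschitz map stretching $\mu$ by $e^{K}$ and the complement strictly less is unique up to isotopy --- is false without the harmonicity constraint (off the stretch locus such maps have enormous freedom), and with the constraint it is essentially the main theorem of the harmonic-stretch theory rather than a lemma one may assume. (iv) Finally, \cref{PW} as quoted only produces \emph{a} harmonic stretch ray along $\mu$ starting at $x$, with no control over which points it visits; without the uniqueness in (iii) there is no way to conclude $m_{K}=y$, so the identification $\Phi=h_{K}$ presupposes essentially the statement being proved. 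Your closing consistency check (that $\dth(\xi(t),y)\ge K-t$ and that $\mu$ remains ratio-maximising along the ray) is correct but does not force the ray to terminate at $y$. In short, the proposal is a plan whose unproven ingredients are exactly the content of Pan--Wolf's theorem; citing \cite{PW}, as the paper does, is the honest resolution.
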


A similar result at the infinitesimal level was also proved.

\begin{proposition}[Proposition 13.12 in Pan--Wolf \cite{PW}]
\label{cor:PW}
Any vector $v \in  T_x \teich(S)$ with $\Vert v\Vert_{Th}=1$ can be expressed as a harmonic stretch vector along some geodesic lamination.
\end{proposition}

This enables us to introduce an infinitesimal version of maximal ratio-maximising lamination.
\begin{definition}[infinitesimally most stretched chain recurrent geodesic lamination]
Let $v$ be a unit tangent vector in $(T_x\teich(S), \Vert \cdot \Vert_\th)$.
By \cref{cor:PW}, $v$ is expressed as a harmonic stretch vector along some  geodesic lamination $\lambda$.
We say that a chain recurrent geodesic lamination contained in $\lambda$ is {\em infinitesimally most stretched}  by $v$.
%By the definition of harmonic stretch ray, there is a chain-recurrent lamination $\lambda$ such that the maximal ratio-maximising lamination between $x$ and $r_v(t)$ is $\lambda$.
In particular, $\lambda$ is called a {largest infinitesimally most stretched geodesic lamination}, and $\lambda^\CR$ is called a {largest infinitesimally most stretched chain-recurrent geodesic lamination}.
\end{definition}

We shall show that $\lambda$ depends only on $v$ and is independent of the choice of a harmonic stretch vector.

%For any unit vector $v$ in $T_x \teich(S)$, since $\Vert v \Vert_\th=1$, there is a sequence of simple closed curves $\gamma_i$ such that $\displaystyle\lim_{i \to \infty}\frac{d\ell_{\gamma_i}(v)}{\len_x(\gamma_i)}=1$ and for every simple closed curve $c$, we have $\displaystyle\frac{d\ell_c(v)}{\len_x(c)} \leq 1$.
%This implies that the Hausdorff limit $\lambda$ of a (subsequence of) $\{\gamma_i\}$ is {\em infinitesimally most  stretched}, that is, the length of any segment on $\lambda$  is increased at unit speed along $v$, which realises the biggest infinitesimal stretch factor on $S$.
%The following is obtained by the same argument as for the proof of Theorem 8.2 in Thurston \cite{ThM}.

\begin{lemma}
\label{unique stretch}
Suppose that two geodesic laminations $\lambda$ and $\mu$ are infinitesimally most  stretched laminations by the same unit vector $v$.
Then we have $\lambda=\mu$.
In particular, two geodesic laminations infinitesimally most stretched by the same vector cannot intersect transversely.
% cannot intersect transversely, and they cannot be  disjoint either.
%In particular if $v$ is a stretch vector along $\lambda$, then any infinitesimally most  stretched chain-recurrent geodesic lamination is contained in $\lambda$.
\end{lemma}
\begin{proof}
It is evident that the second statement follows from the first statement.
We prove the first statement.
%We shall show that neither can $\lambda$ and $\mu$   intersect transversely nor is there a leaf of one of them which is not contained in the other.
Suppose that both $\lambda$ and $\mu$ are largest infinitesimally most stretched laminations by $v$, and let $r_\lambda(t)$ and $r_\mu(t)$ be two harmonic stretch rays which maximally stretch $\lambda$ and $\mu$ respectively and which are tangent to $v$ at $t=0$.
Denote $r_\lambda(t)$  by $m_t$.
Then, there exists a harmonic stretch map $f_t \colon (S,x) \to (S, m_t)$ such that $f_t|\lambda$ stretches $\lambda$ by the factor $e^t$.
%Since $r_\mu(t)$ is also tangent to $v$ at $0$, we can isotope the $f_t$ so that $f_t|\mu$ stretches $\mu$ by the factor $e^{t-o(t)}$ for $t$ near to $0$.
%Suppose that $\lambda$ and $\mu$ intersect transversely.
%Then, making use of the argument of Thurston \cite[Theorem 8.2]{ThM}, we see that there is a geodesic $(S,x)$ which is stretched more than $e^t$ by $f_t$, which contradicts the fact that the Lipschitz constant of $f_t$ is $e^t$ (\cite[Theorem 1.5]{PW}).

Suppose that there exists a point on one of $\lambda,\mu$ which is not contained in the other.
Then we can assume, by exchanging the names if necessary, that $\mu$ has a point which is not contained in $\lambda$.
As was shown in the proof of  \cite[Lemma 7.3]{PW}, we can take a neighbourhood $U$ of a point $p$ in $\mu \setminus \lambda$, which we can assume to be contained in $S \setminus \lambda$, so that there exists a positive constant $\varepsilon$ such that the Lipschitz constant of $f_t|U$ is bounded by $e^{t-\epsilon e^t}$, whose derivative is less than $1$ at $t=0$.
On the other hand, the Lipschitz constant on the entire surface $S$ is bounded by $e^t$.
Therefore, $\mu$ is stretched at a speed strictly less than $1$.
This contradicts the facts that $r_\mu(t)$ stretches a component of $\mu$ in $S \setminus \lambda$ at unit speed, and that $r_\lambda(t)$ is tangent to $r_\mu(t)$ at $t=0$.
Thus we have $\lambda=\mu$.
\end{proof}

From this lemma, we see that for every unit vector $v \in T_x \teich(S)$, there is a unique largest infinitesimally most stretched chain-recurrent geodesic lamination.
%Taking the uniqueness into account, we call this lamination the {\em largest infinitesimally most stretched lamination}.

The last property of \cref{unique stretch} characterises stretch vectors.

\begin{lemma}
\label{maximal then stretch}
If the infinitesimally most stretched chain-recurrent geodesic lamination of a unit vector $v$ is maximal, then $v$ is a stretch vector.
\end{lemma}
\begin{proof}
If a harmonic stretch ray stretches a maximal chain-recurrent geodesic lamination, then it coincides with Thurston's stretch ray, as was shown in \cite[Theorem 1.1]{PW}.
Our claim follows immediately.
%Let $v$ be a unit vector whose largest infinitesimally most stretched lamination is a maximal chain-recurrent geodesic lamination $\lambda$.
%As was shown by Thurston in \cite{ThM}, with $\lambda$ a cataclysm coordinate  chart is associated, which is a smooth parametrisation of $\teich(S)$ given by the space of measured foliations transverse to $\lambda$.
%The tangent vectors along which every leaf on $\lambda$ is stretched or shrunk by the same factor and gives the most stretched or shrunk direction  correspond to the scalar multiples of the foliation $F(\lambda)$, and hence constitute a one-dimensional subspace.
%Since the direction of stretching $\lambda$ is given by $r \v_x(\lambda)$ for $r \in \reals_+$, we see that a unit vector whose largest infinitesimally most stretched lamination is $\lambda$ is always equal to $\v_x(\lambda)$.
\end{proof}

It is not obvious from the definition that a linear combination of two vectors which infinitesimally stretch a geodesic lamination $\lambda$ also infinitesimally stretch $\lambda$.
Still, we can prove as follows that this is the case by the same argument as the proof of \cref{unique stretch}.

\begin{lemma}
\label{linear combination}
Let $u$ and $v$ be unit tangent vectors in $T_x \teich(S)$ which infinitesimally most stretch a geodesic lamination $\lambda$.
Then for any $\alpha \in [0,1]$, the unit tangent vector $\alpha u+(1-\alpha)v$ also infinitesimally most stretches $\lambda$.
\end{lemma}
\begin{proof}
Let $r(t)$ be a harmonic stretch ray which is tangent to $w:=\alpha u+ (1-\alpha)v$ at $t=0$.
Let $\mu$ be the geodesic lamination which is stretched by $r(t)$.
Since $\mu$ is the largest infinitesimally most stretched geodesic lamination of $w$, by the same argument used in the proof of \cref{unique stretch}, we see that $\lambda$ must be contained in $\mu$.
This means, by definition, that $\lambda$ is infinitesimally most stretched by $w$.
\end{proof}

The main result of this section is the following theorem.
\begin{theorem}
\label{BN}
A unit vector $v\in T_x\teich(S)$ whose largest infinitesimally most stretched geodesic lamination is  $\lambda$ can be expressed as a linear combination $\alpha_1 \vst_x(\lambda_1)+ \dots + \alpha_k \vst_x(\lambda_k)$, where $\lambda_1, \dots , \lambda_k$ lie in $\mcr(\lambda^\CR)$, \ie they are maximal chain-recurrent geodesic laminations containing  $\lambda^\CR$.
\end{theorem}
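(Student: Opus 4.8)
The plan is to approximate the harmonic stretch vector $v$ by the initial velocity vectors of the broken geodesics furnished by \cref{concatenation}, and then to recover $v$ from a first-order expansion at the basepoint, using a compactness argument over $\mcr(\lambda^\CR)$ to reduce to finitely many laminations.

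First I would set up the approximating objects. Let $\xi\colon[0,\infty)\to\teich(S)$ be the harmonic stretch ray along $\lambda$ with $\xi(0)=x$ and $\xi'(0)=v$ (\cref{PW}, \cref{cor:PW}), and for $t>0$ put $y_t:=\xi(t)$, so that $\dth(x,y_t)=t$. By the defining properties of the harmonic stretch map in \cref{PW}, the map $h_t\colon(S,x)\to(S,y_t)$ stretches every leaf of $\lambda$ by the factor $e^{t}=e^{\dth(x,y_t)}$ while realising the optimal Lipschitz constant; hence $\lambda$ is ratio-maximising from $x$ to $y_t$, and therefore $\lambda=\lambda^\CR\subseteq\mu(x,y_t)$. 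Applying \cref{concatenation} to the pair $(x,y_t)$ gives a geodesic from $x$ to $y_t$ that is a concatenation of at most $K=K(g)$ stretch paths, the $i$-th running along a maximal chain-recurrent lamination $\lambda_i^{t}$ containing $\mu(x,y_t)$ and hence containing $\lambda^\CR$, so $\lambda_i^{t}\in\mcr(\lambda^\CR)$. Writing $s_i^{t}\ge 0$ for the $\dth$-length of the $i$-th piece, we have $\sum_i s_i^{t}=t$; padding with degenerate pieces we may take exactly $K$ pieces for every $t$, and set $s_i^{t}=t\,\sigma_i^{t}$ with $\sigma_i^{t}\in[0,1]$ and $\sum_i\sigma_i^{t}=1$.

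Next I would pass to a limit as $t\downarrow 0$. Since $\mcr(\lambda^\CR)$ is compact in the Hausdorff topology by \cref{Hausdorff closed} and the simplex $\{\sigma\in[0,1]^{K}:\sum_i\sigma_i=1\}$ is compact, there is a sequence $t_n\downarrow 0$ along which $\lambda_i^{t_n}\to\lambda_i\in\mcr(\lambda^\CR)$ in the Hausdorff topology and $\sigma_i^{t_n}\to\sigma_i\ge 0$ with $\sum_i\sigma_i=1$, for each $i$. Denoting the broken geodesic by $x=p_0^{n}\to\cdots\to p_K^{n}=y_{t_n}$ with $p_i^{n}=\str_{p_{i-1}^{n}}\!\big(\lambda_i^{t_n},t_n\sigma_i^{t_n}\big)$, its total $\dth$-length $t_n\to 0$, so all $p_i^{n}$ converge to $x$. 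Working in a smooth chart around $x$, Thurston's cataclysm coordinates make each stretch map $C^1$ in its arguments, and this can be made uniform over the compact family $\mcr(\lambda^\CR)$ of maximal chain-recurrent laminations; thus $\str_m(\lambda',s)=m+s\,\vst_m(\lambda')+o(s)$ with an error uniform in $(m,\lambda')$ for $m$ near $x$ and $\lambda'$ near the $\lambda_i$, and $\vst_m(\lambda')$ continuous in $(m,\lambda')$. Composing the $K$ steps and using $\sum_i t_n\sigma_i^{t_n}=t_n$ together with $p_{i-1}^{n}\to x$, I obtain $y_{t_n}=x+t_n\sum_{i=1}^{K}\sigma_i^{t_n}\,\vst_x(\lambda_i^{t_n})+o(t_n)$; dividing by $t_n$, letting $n\to\infty$, and invoking $\xi'(0)=v$ with the continuity of $\vst_x(\cdot)$ on $\mcr(\lambda^\CR)$ yields $v=\sum_{i=1}^{K}\sigma_i\,\vst_x(\lambda_i)$. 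Discarding the indices with $\sigma_i=0$ gives the asserted expression $v=\alpha_1\vst_x(\lambda_1)+\cdots+\alpha_k\vst_x(\lambda_k)$ with $\lambda_1,\dots,\lambda_k\in\mcr(\lambda^\CR)$ — in fact a convex combination.

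The hard part will be the uniform first-order expansion of the stretch map used in the third step: one needs not merely that an individual stretch map is smooth (Thurston's smoothness of cataclysm coordinates) but that its derivative at $t=0$ — the stretch vector — and the Taylor remainder are controlled uniformly as the maximal chain-recurrent lamination ranges over the compact set $\mcr(\lambda^\CR)$ and the basepoint ranges near $x$. This rests on the continuity of the assignment $\lambda'\mapsto\vst_{(\cdot)}(\lambda')$, which I would derive from Thurston's description of stretch maps via horocyclic foliations together with the Hausdorff-compactness of \cref{Hausdorff closed} (it is also implicit in \cite{Infinitesimal}). Everything else — the compactness extraction, and the triviality that a linear combination of members of a set uses only finitely many of them — is routine.
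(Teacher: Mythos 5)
Your proposal is correct and follows essentially the same route as the paper: your first-order expansion of the composed stretch flows is exactly the content of the paper's \cref{flows}, and the uniform $C^1$ control over the compact family of stretch rays indexed by $\mcr(\lambda^\CR)$, which you rightly flag as the hard part, is precisely what the paper establishes in \cref{closed} (via Arzel\`a--Ascoli together with \cref{continuity} and \cref{maximal then stretch}). The only cosmetic difference is that the paper isolates the expansion step as an abstract lemma about $C^1$-compact families of flows, whereas you carry it out inline.
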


%For a chain-recurrent geodesic lamination $\mu$, we denote by $\mathrm{MCR}(\mu)$ the set of maximal chain-recurrent geodesic laminations containing $\mu$.

\subsection{A technical lemma}
To prove \cref{BN}, we need to show a technical lemma
about oriented 1-foliations and their tangent vectors.
By an \textit{oriented 1-foliation}, we mean a one-dimensional $C^1$-foliation (by which we mean that every leaf is $C^1$) on a smooth manifold $M$ such that every leaf is oriented, and such that the orientations on the leaves vary continuously along any path transverse to the foliation.
For each oriented 1-foliation, we can construct a vector field of unit tangent vectors to the foliation so that the positive direction of the vector coincides with the orientation of the foliation at each point. 
%Sucha vector field can be constructed by taking the associated line fieldof the foliation and assigning a direction at every point using theorientation of the foliation. 
We  refer to the flow along the
vector field associated with the foliation as the `flow along the
1-foliation'.

Let $\mc{F}_i\, (i \in I)$ be a family of oriented  1-foliations defined on an open domain  $ U \subset \reals^k$ containing the origin $\mathbf 0$,  with smooth (possibly empty) boundary. 
(Note that we do not assume that $I$ is countable.)
For $U^{\eps} = \{\mathbf y\in U: d(\mathbf y,\del U) >\eps\}$ with  $\eps>0$, 
we let $f_i:[0,\eps)\times U^{\eps} \to \R^k$ be a map obtained by setting $f_i(t,\mathbf x)$ to be the flow  for time $t$ along
$\mc{F}_i$ starting from $\mathbf x$. 
Taking a sufficiently small $\eps$, 
we can always ensure that $\mathbf 0 \in U^{\eps}$ and that 
all  the flows $f_i$ are defined at $\mathbf x=\mathbf 0$.
For any set of tangent vectors $\{v_i(t)\}$ at a point $(t,\mathbf 0) \in [0,\infty) \times U$, we define $\Span(\{v_i(t)\}_{i \in I})$ to be the set \begin{equation*}
\begin{split}
\{\alpha_{i_1}v_{i_1}(t)+ \dots + \alpha_{i_n} v_{i_n}(t) \mid  \ &\alpha_{i_1}, \dots , \alpha_{i_n} > 0; \alpha_{i_1}+ \dots +\alpha_{i_n}=1;  \\& n \in \naturals; i_1, \dots , i_n \in I\}.
\end{split}
\end{equation*}
We note that even  if we relax the condition $\alpha_{i_1}, \dots ,\alpha_{i_n}>0$ to $\alpha_{i_1} , \dots , \alpha_{i_n} \geq 0$, we have the same space, since at least one of these scalars must be positive.
%In the case when $\{v_i\}$ is a finite set $\{v_1, \dots, v_k\}$, we use the notation $\Span_{\ge 0}(v_1, \dots , v_k)$.

\fussy
The following basic lemma will be essentially used in the proof of \cref{BN}.
In this lemma, we use the term \lq piecewise' $C^1$ with respect to $\mathbf x \in U^\eps$ near  $\mathbf 0$.
The meaning is as follows.
Consider the tangent space $T_{\mathbf x} U^\eps$ for $\mathbf x$ near $\mathbf 0$.
There is a decomposition of $T_{\mathbf x} U^\eps$ into finitely many closed conical regions meeting along their boundaries. The decomposition varies continuously with respect to $\mathbf x$.
A map is said to be piecewise $C^1$ if it has  directional derivatives which  are continuous near $\mathbf 0$ and partially differentiable with respect to $t$ if the directions  are restricted to any one of the conical regions.

\begin{lemma} 
\label{flows}
In the above setting of a sequence of flows $\{f_i\}$ and their tangent vectors $\{v_i(t)\}$,
suppose that $f_i(t,\mathbf x)$ is $C^1$ with respect to $t$, and piecewise $C^1$ with respect to $\mathbf x\in U^\epsilon$ near $\mathbf 0$.
Suppose moreover that both families $\{f_i(t, \mathbf x)\}_{i \in I}$ and $\displaystyle\{\frac{\partial f_i}{\partial t}(t, \mathbf x)\}_{i \in I}$  are compact with respect to the uniform topology on any compact subset of $[0,\eps) \times U^\eps$.
  Let $\alpha:[0,T) \to U$ be a path with $\alpha(0) = \mathbf 0$ which is differentiable at $0$ and such that for every $t< T$, there exist 
  $t_1,\hdots,t_n\ge 0$ 
  %which are differentiable with respect to $t$, 
  and $i_1,\hdots,i_n \in \mathbb N$ satisfying   $t = \sum_i t_i$ and $\alpha(t)
  = f_{i_1}(t_1,f_{i_2}(t_2,(\hdots,(f_{i_n}(t_n,\mathbf 0)))))$, where $n$ is an    integer which is independent of $t$ whereas $i_1, \dots , i_n$ may depend on $t$. 
  %Furthermore, assume that the non-negative span,$\Span_{\ge 0}(\{f_i\}_i)$ is a closed subset of $\R^k$. 
  Then  $\alpha'(0)$ lies in $\Span(\{v_i(0)\}).$ 
\end{lemma}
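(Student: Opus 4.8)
The plan is to reduce the statement, by a compactness-plus-limiting argument, to the case where the concatenation structure is "rigid" near $t=0$, and then differentiate. First I would fix the constant $n$ from the hypothesis and, for each $t<T$, write $\alpha(t)=f_{i_1(t)}(t_1(t),\dots,f_{i_n(t)}(t_n(t),\mathbf 0)\dots)$ with $t_1(t)+\dots+t_n(t)=t$. Choose a sequence $s_m\downarrow 0$. Since each $t_j(s_m)\in[0,s_m]$, after passing to a subsequence we may assume $t_j(s_m)/s_m\to a_j\ge 0$ for each $j\in\{1,\dots,n\}$, with $a_1+\dots+a_n=1$; and since the index set of relevant flows that actually occur is at worst countable but the flows themselves form a $C^1$-compact family, after a further subsequence we may assume that for each slot $j$ the flow $f_{i_j(s_m)}$ converges in the $C^1$-topology (on a fixed compact neighbourhood of $\mathbf 0$) to some flow $g_j$ lying in (the closure of) the family; in particular $g_j$ is the flow of an oriented $1$-foliation and its tangent vector at $\mathbf 0$ is a limit of the $v_{i_j(s_m)}(0)$, hence lies in $\overline{\Span(\{v_i(0)\})}$. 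Actually one must be slightly careful: the "slot'' a given foliation occupies can change with $m$, so the cleanest bookkeeping is to regard the data at stage $m$ as the composition $G_m := f_{i_1(s_m)}(t_1(s_m),\cdot)\circ\dots\circ f_{i_n(s_m)}(t_n(s_m),\cdot)$ evaluated at $\mathbf 0$, and exploit that $C^1$-compactness makes $m\mapsto G_m$ (as a map of the time-parameters $(t_1,\dots,t_n)$ and the base point) subconverge in $C^1$.

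The key computation is then the following. Write $\Phi_m(\tau_1,\dots,\tau_n) := f_{i_1(s_m)}(\tau_1,\cdot)\circ\dots\circ f_{i_n(s_m)}(\tau_n,\cdot)$ evaluated at $\mathbf 0$, so that $\alpha(s_m)=\Phi_m(t_1(s_m),\dots,t_n(s_m))$ and $\Phi_m(\mathbf 0)=\mathbf 0$. By the $C^1$-compactness hypothesis, after passing to a subsequence $\Phi_m\to\Phi$ in $C^1$ on a neighbourhood of $\mathbf 0$ in $[0,\eps)^n$, and $\partial\Phi_m/\partial\tau_j\,(\mathbf 0)\to\partial\Phi/\partial\tau_j\,(\mathbf 0)$. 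Crucially, $\partial\Phi_m/\partial\tau_j(\mathbf 0)$ is exactly the tangent vector at $\mathbf 0$ of the foliation occupying slot $j$ at stage $m$ — because the outer flows are evaluated at time $0$, i.e. are the identity, when we sit at the base point — so each limit $w_j:=\partial\Phi/\partial\tau_j(\mathbf 0)$ is a limit of vectors of the form $v_{i}(0)$, hence $w_j\in\overline{\Span(\{v_i(0)\})}$. Now compute
\begin{equation*}
\alpha'(0)=\lim_{m\to\infty}\frac{\alpha(s_m)-\alpha(0)}{s_m}=\lim_{m\to\infty}\frac{\Phi_m(t_1(s_m),\dots,t_n(s_m))-\Phi_m(\mathbf 0)}{s_m}=\sum_{j=1}^n a_j\,w_j,
\end{equation*}
where the last equality uses the first-order Taylor expansion of $\Phi_m$ at $\mathbf 0$ together with $t_j(s_m)/s_m\to a_j$ and the uniform ($C^1$-compact) control on the remainder. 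Since $a_j\ge 0$ and $\sum a_j=1$, this exhibits $\alpha'(0)$ as a convex combination of the $w_j$, hence $\alpha'(0)\in\overline{\Span(\{v_i(0)\})}$.

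Finally I would upgrade $\overline{\Span(\{v_i(0)\})}$ to $\Span(\{v_i(0)\})$: the set of unit tangent vectors $\{v_i(0)\}$ is relatively compact (again by $C^1$-compactness of $\{f_i\}$, the vectors $v_i(0)$ all lie in a compact subset of $\reals^k\setminus\{\mathbf 0\}$ since they are unit vectors of the foliations), so its closure consists of vectors that are themselves $v$'s or limits thereof; a convex combination of finitely many such, being again a positive combination, lies in $\Span$. The main obstacle I expect is not the differentiation step but the bookkeeping in the compactness argument: making precise, with $n$ fixed but the indices $i_1(t),\dots,i_n(t)$ allowed to vary wildly with $t$ and the index set $I$ possibly uncountable, that one can extract a subsequence along which all $n$ slots converge in $C^1$ — this is where the hypothesis that $\{f_i\}$ is $C^1$-compact on compact subsets of $[0,\eps)\times U^\eps$ does the essential work, and it should be invoked slot-by-slot, composing finitely many ($n$) successive extractions.
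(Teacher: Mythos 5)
Your proposal is correct and follows essentially the same route as the paper: a first-order expansion of the $n$-fold composition in the time variables $t_1,\dots,t_n$, identification of the first-order coefficients at the origin with the tangent vectors $v_{i_j}(0)$, and use of the $C^1$-compactness of $\{f_i\}$ to control the error terms and the varying indices as $t\to 0$. Your packaging of the composition as a single map $\Phi_m$ of the time parameters is a cleaner bookkeeping of the paper's iterated peeling-off expansion, and your closing remark about passing from the closure of $\Span(\{v_i(0)\})$ back to $\Span(\{v_i(0)\})$ is exactly what the compactness (not mere precompactness) hypothesis is there for.
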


\begin{figure}
  \begin{center}
    \includegraphics[width=0.6\textwidth]{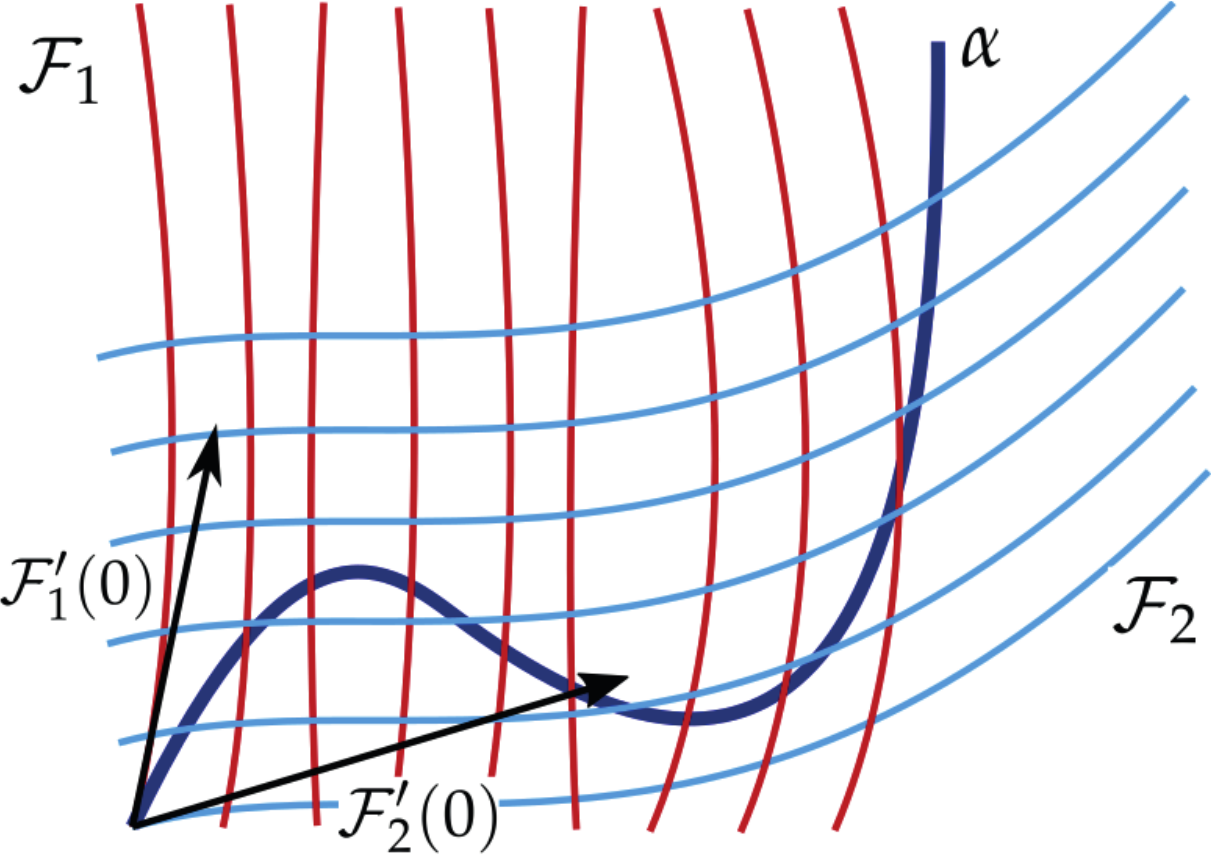}
  \end{center}
  \caption{The picture of the set-up in Lemma \ref{flows}}
\end{figure}

\begin{proof}
%To compute  $\alpha'(0)$, we  need to use the coordinates of $\alpha(t), f_i(t,\mathbf x)$ and $v_i(t)$.
%We denote the coordinates of $\alpha(t), f_i(t, \mathbf x)$ and $v_i(t)$ by $(\alpha^1(t), \dots , \alpha^k(t))$, $(f^1_i(t,\mathbf x), \dots , f^k(t,\mathbf x))$, and  $(v^1_i(t), \dots , v^k_i(t))$ respectively.
%  In order to estimate $(\alpha^l)'(0)\  (l=1,\dots , k)$, we first note that 
%  as $t$ goes to $0$, so do the $t_i$. 
For small positive $t$, we can take a first-order expansion of
  $\alpha(t) = f_{i_1}(t_1,f_{i_2}(t_2,(\hdots,(f_{i_n}(t_n,\mathbf 0)))))$ with respect to $t_1$  to get
%  \begin{equation}
%  \begin{split}
%  \label{expand1}
%    \alpha(t) = &f_{i_1}(0,f_{i_2}(t_2,(\hdots,(f_{i_n}(t_n,\mathbf 0))))) \\
%    &+ t_1 \frac{\del f_{i_1}}{\del t}(0, f_{i_2}(t_2,(\hdots,(f_{i_n}(t_n,\mathbf 0)))))
%    +  t_1^2K(t_1)\\ 
%    = &f_{i_2}(t_2,(\hdots,(f_{i_n}(t_n,\mathbf 0))))\\
%   & + t_1 \frac{\del f_{i_1}}{\del t}(0, f_{i_2}(t_2,(\hdots,(f_{i_n}(t_n,\mathbf 0)))))
%    +  t^2K_1(t_1),
%    \end{split}
%  \end{equation}
\begin{equation}
  \begin{split}
  \label{expand1}
    \alpha(t) = &f_{i_1}(0,f_{i_2}(t_2,(\hdots,(f_{i_n}(t_n,\mathbf 0))))) \\
    &+ t_1 \frac{\del f_{i_1}}{\del t}(0, f_{i_2}(t_2,(\hdots,(f_{i_n}(t_n,\mathbf 0)))))+o(t)
   \\ 
    = &f_{i_2}(t_2,(\hdots,(f_{i_n}(t_n,\mathbf 0))))\\
   & + t_1 \frac{\del f_{i_1}}{\del t}(0, f_{i_2}(t_2,(\hdots,(f_{i_n}(t_n,\mathbf 0)))))+o(t),
    \end{split}
  \end{equation}
%\begin{equation}
%  \begin{split}
%  \label{expand1}
%    \alpha^l(t) = &f_{i_1}^l(0,f_{i_2}(t_2,(\hdots,(f_{i_n}(t_n,\mathbf 0))))) \\
%    &+ t_1 \frac{\del f_{i_1}^l}{\del t}(\theta_1t_1, f_{i_2}(t_2,(\hdots,(f_{i_n}(t_n,\mathbf 0)))))
%   \\ 
%    = &f_{i_2}^l(t_2,(\hdots,(f_{i_n}(t_n,\mathbf 0))))\\
%   & + t_1 \frac{\del f_{i_1}^l}{\del t}(\theta_1t_1, f_{i_2}(t_2,(\hdots,(f_{i_n}(t_n,\mathbf 0))))),
%    \end{split}
%  \end{equation}
 where $\displaystyle\frac{\del f_{i_1}}{\del t}$ stands for the partial derivative 
  of $f_{i_1}$ with respect to the first coordinate.
  %, and $\theta_1$ moves in $[0,1]$.
%  Since $\{f_i\}$ is compact in $C^2$-topology in any compact subset of $[0,\infty)$, we see that there is a constant $L$ indpendent of $\alpha$ bounding  $\Vert K(t)\Vert$ and $\Vert K_1(t_1)\Vert$  for sufficiently small $t$, where $\Vert \cdot \Vert$ denotes the norm on the $n\times n$ matrices defined to be the sum of the entries.
  In the same way, expanding the first term   $f_{i_2}(t_2,(\hdots,(f_{i_n}(t_n,\mathbf 0))))$ of \cref{expand1} in terms 
  of $t_2$, we get
   \begin{equation}
  \begin{split}
   \label{eq:alpha_est1}
    \alpha(t) &= 
    f_{i_3}(t_3,(\hdots,(f_{i_n}(t_n,\mathbf 0))))
    + t_2 \frac{\del f_{i_2}}{\del t}(0, f_{i_3}(t_3,(\hdots,(f_{i_n}(t_n,\mathbf 0))))) +o(t) \\
    &\quad + t_1 \frac{\del f_{i_1}}{\del t}(0,
    f_{i_2}(t_2,(\hdots,(f_{i_n}(t_n,\mathbf 0)))))+o(t).
    \end{split}
  \end{equation}
%  \begin{equation}
%  \begin{split}
%   \label{eq:alpha_est1}
%    \alpha(t) &= 
%    f_{i_3}(t_3,(\hdots,(f_{i_n}(t_n,\mathbf 0))))
%    + t_2 \frac{\del f_{i_2}}{\del t}(0, f_{i_3}(t_3,(\hdots,(f_{i_n}(t_n,\mathbf 0))))) \\
%    &\quad + t_1 \frac{\del f_{i_1}}{\del t}(0,
%    f_{i_2}(t_2,(\hdots,(f_{i_n}(t_n,\mathbf 0))))) +  t^2(K_1(t_1)+K_2(t_2)),
%    \end{split}
%  \end{equation}
 % with $\theta_2$ moving in $[0,1]$.
Continuing the expansion in this manner, we 
  obtain
    \begin{equation} 
  \begin{split}
  \label{eq:alpha_est2}
    \alpha(t) = &f_{i_n}(t_n,\mathbf 0)
    +\sum_{j=1}^{n-1} \left( t_j \frac{\del f_{i_j}}{\del t}
    (0,f_{i_{j+1}}(t_{j+1},(\hdots,(f_{i_n}(t_n,\mathbf 0))))\right)\\
    = &t_n \frac{\del f_{i_n}}{\del t}(0,\mathbf 0)
    +\sum_{j=1}^{n-1} t_j \frac{\del f_{i_j}}{\del t} (0,f_{i_{j+1}}(t_{j+1},(\hdots,(f_{i_n}(t_n,\mathbf 0))))) +o(t).
  \end{split}
  \end{equation}
%  \begin{equation} 
%  \begin{split}
%  \label{eq:alpha_est2}
%    \alpha^l(t) = &f_{i_n}^l(t_n,\mathbf 0)
%    +\sum_{j=1}^{n-1} \left( t_j \frac{\del f^l_{i_j}}{\del t}
%    (\theta_jt_j,f_{i_{j+1}}(t_{j+1},(\hdots,(f_{i_n}(t_n,\mathbf 0))))\right)\\
%    = &t_n \frac{\del f_{i_n}^l}{\del t}(\theta_nt_n,\mathbf 0)
%    +\sum_{j=1}^{n-1} t_j \frac{\del f_{i_j}^l}{\del t} (\theta_jt_j,f_{i_{j+1}}(t_{j+1},(\hdots,(f_{i_n}(t_n,\mathbf 0))))) 
%  \end{split}
%  \end{equation}
%  \begin{equation} 
%  \begin{split}
%  \label{eq:alpha_est2}
%    \alpha(t) = &f_{i_n}(t_n,\mathbf 0)
%    +\sum_{j=1}^{n-1} \left( t_j \frac{\del f_{i_j}}{\del t}
%    (0,f_{i_{j+1}}(t_{j+1},(\hdots,(f_{i_n}(t_n,\mathbf 0))))) + t^2K_j(t_j)\right)\\
%    = &t_n \frac{\del f_{i_n}}{\del t}(0,\mathbf 0)
%    +\sum_{j=1}^{n-1} t_j \frac{\del f_{i_j}}{\del t} (0,f_{i_{j+1}}(t_{j+1},(\hdots,(f_{i_n}(t_n,\mathbf 0))))) \\
%    &+ t^2K(t),
%  \end{split}
%  \end{equation}
%with the $\theta_j$ moving in $[0,1]$.
Since $\displaystyle v_{i_j}(t) = \frac{\del}{\del t} f_{i_j} (t,\mathbf 0)$, 
  we have $f_{i_j}(t_i,\mathbf 0) = t_i v_{i_j}(0)+o(t)$.
  % for some $\vartheta_i^l \in [0,1]$. 
  Then we have 
   \begin{equation}
   \label{last alpha}
  \begin{aligned}
    f_{i_{n-1}}&(t_{n-1},f_{i_n}(t_n,\mathbf 0))
    =  f_{i_{n-1}}(t_{n-1}, t_n
    v_{i_n}(0)+o(t))\\
   = & f_{i_{n-1}}(t_{n-1},\mathbf 0) 
   +d_{\mathbf x} f_{i_{n-1}}(t_{n-1},\mathbf 0) (t_nv_{i_n}(0)+o(t))+o(t)\\
   =&t_{n-1}v_{i_{n-1}}(0) +t_nd_{\mathbf x} f_{i_{n-1}}(t_{n-1},\mathbf 0)(v_{i_n}(0))+o(t)
%   \\
%   =f_{i_{n-1}}(t_{n-1},\mathbf 0) + \\
%    = & t_{n-1}v_{i_{n-1}}(0)+  t_{n-1}t_nv_{i_n}(0)+o(t),
% \\
%   & = t_{n-1}v_{n-1}(\theta_{n-1}t_{n-1})  + t_n (E_n v_{i_n}+t_{n-1} (v_{i_{n-1}}, v_{i_n}) + t^2K'''_n v_{i_n})
%    + t^2K''_n\\
%   & = t_{n-1}v_{i_{n-1}} + t_n v_{i_{n}} + t^2K,
    \end{aligned}
  \end{equation}
%    \begin{equation}
%  \begin{aligned}
%    f^l_{i_{n-1}}&(t_{n-1},f_{i_n}(t_n,\mathbf 0))
%    =  f^l_{i_{n-1}}(t_{n-1}, t_n
%    (v_{i_n}^1(\vartheta^1_n t_n), \dots, v_{i_n}^k(\vartheta^k_n t_n)))\\
%   = & f_{i_{n-1}}^l(t_{n-1},\mathbf 0) \\
%   &+ t_nd_{\mathbf x} f_{i_{n-1}}(t_{n-1},\vartheta_n't_n(v_{i_n}^1(\vartheta^1_n t_n), \dots, v_{i_n}^k(\vartheta^k_n t_n)))
%    (v_{i_n}^1(\vartheta_n^1 t_n), \dots, v_{i_n}^k(\vartheta^k_nt_n))\\
%    = & t_{n-1}v_{i_{n-1}}^l(\vartheta_{n-1}^lt_{n-1}) \\ &+  t_nd_{\mathbf x} f_{i_{n-1}}(t_{n-1},t_n(\varsigma^1v_{i_n}^1(\vartheta^1_n t_n), \dots, \varsigma^kv_{i_n}^k(\vartheta^k_n t_n)))(v_{i_n}^1(\vartheta_n^1 t_n), \dots, v_{i_n}^k(\vartheta^k_nt_n))
% \\
%%   & = t_{n-1}v_{n-1}(\theta_{n-1}t_{n-1})  + t_n (E_n v_{i_n}+t_{n-1} (v_{i_{n-1}}, v_{i_n}) + t^2K'''_n v_{i_n})
%%    + t^2K''_n\\
%%   & = t_{n-1}v_{i_{n-1}} + t_n v_{i_{n}} + t^2K,
%    \end{aligned}
%  \end{equation}
 where $(d_{\mathbf x}f_j)(t,\mathbf x)$ denotes the differential of $f_j$ with respect to $\mathbf x$ at $(t,\mathbf x)$.
   Since $f_{i_n}$ was assumed to be piecewise $C^1$ with respect to $\mathbf x$ at $\mathbf 0$,  the matrix
  $d_{\mathbf x} f_{i_{n-1}}(t_{n-1},\mathbf 0)$ is differentiable with respect to $t_{n-1}$.
  %, and the third line is a first-order expansion with respect to $t_n$.
  Therefore we have 
  \begin{equation}
  \label{diff}
  \begin{aligned}
  \frac{\partial f_{i_{n-1}}}{\partial t}(0, f_{i_n}(t_n,\mathbf 0))=v_{i_{n-1}}(0)+t_n(\frac{\partial}{\partial t}d_{\mathbf x} f_{i_{n-1}}(t_{n-1},\mathbf 0)) +o(1)
  \end{aligned}
  \end{equation}
   We can continue this computation 
  to replace the composition terms in
  \cref{eq:alpha_est2} with the following form:
%  \begin{equation}
%  \begin{aligned}
%  \alpha'(0)=
%  
%  \end{aligned}
%  \end{equation}
%  \begin{equation*}
%    (\alpha^l)'(0) =\lim_{t\to 0} \sum_{j=1}^n \frac{t_{j}v^l_{i_j}(O(t))}{t}, 
%  \end{equation*}
% and hence
 \begin{equation}
 \label{eq:alpha_est3}
 \begin{aligned}
\alpha'(0)&= \lim_{t\to 0}\frac{\alpha(t)}{t}\\
&=\lim_{t\to0}\sum_{j=1}^n \frac{t_n}{t} v_{i_j}(0).
\end{aligned}
 \end{equation}
 Although the choice of $i_1, \dots , i_n$ may vary with $t$, the assumption of  compactness  in the statement implies that $o(t)$ and $o(1)$ which appear in \cref{last alpha}  and \cref{diff} give a single $o(1)$ term in the computation of  $\displaystyle \lim_{t\to 0}\frac{\alpha(t)}{t}$ above.
From the same compactness, it follows that  the limit lies in $\Span(\{v_i(0)\})$.
 % where $t_j$ is regarded as a one-variable function of $t$.
%  
%Since we assumed the compactness of $\{f_i\}$ in the $C^1$-topology, any accumulation point  of $\{v_{i_j}(t)\}$ as $t \to 0$, where $i_j$ may depend on $t$, lies in $\Span(\{v_i(0)\})$   and $t=t_1+\dots+t_n$.
Thus we have completed the proof.
%
%  Since $W = \Span_{\ge 0}(\{v_i\}_i)$ is closed, 
%  we also get that for any $t > 0$, we have 
%  that $d(\alpha(t), W) = o(t^2)$. This means 
%  that $\alpha'(0)$ is in $W$.
\end{proof}

%\begin{lemma} \label{lem:cone-SV-closed}
%  If $\lambda$ is chain-recurrent or 
%  empty, then $\Span_{\ge 0}(\{\vst_x(\mu)\mid \mu \in \mcr(\lambda))$ 
%  is closed in $T_x \teich(S)$ for every $x \in \teich(S)$.
%\end{lemma}
\subsection{Proof of \cref{BN}}
Now we return to the situation of \cref{BN}.
Let  $\alpha(t)$ be a harmonic stretch ray along $\lambda$ tangent to $v$ in $T_x \teich(S)$.
For each maximal chain-recurrent geodesic lamination $\mu$ containing $\lambda^\CR$, there is a stretch ray $\str_x(\mu, t)$ starting at $x$.
We shall apply \cref{flows} by setting $\{f_i\}$ to be $\{\str_y(\mu,t)\mid y \in U, \mu \in \mcr(\lambda^\CR)\}$ for an open neighbourhood $U$ of $x$.
For $\mu \in \mcr(\lambda^\CR)$, we define $v_\mu(t)$ to be the tangent vector $\frac{d}{dt}\str_x(\mu,t)$.
Since $\str_x(\mu,t)$ is a geodesic ray, we see that $\Vert v_\mu\Vert_\th=1$ and $v_\mu(0)=\v_x(\mu)$.

Let $\alpha(t)$ be a harmonic stretch ray along  a chain-recurrent geodesic lamination $\lambda$ starting at $x \in \teich(S)$ as in \cref{PW}.
By \cref{concatenation}, there is $n \in \naturals$ such that $x$ and $\alpha(t)$ can be joined by a concatenation of (at most) $n$ stretch paths along maximal chain-recurrent geodesic laminations containing $\lambda$.
Therefore, to show that $\{\str_{(\cdot)}(\mu, \cdot) \mid \mu \in \mcr(\lambda^\CR)\}$ satisfies the hypotheses of \cref{flows}, it remains to show that for any fixed maximal chain-recurrent geodesic lamination $\mu$, the map $\str_{x}(\mu, r)$ is $C^1$ with respect to $t$ and piecewise $C^1$ with respect to $x$, and that the families $\{\str_{x}(\mu, t) \mid \mu \in \mcr(\lambda^\CR); x \in K, t \in J\}$ and $ \{\frac{d}{dt}(\str_x(\mu,t))\mid \mu \in \mcr(\lambda^\CR); x \in K, t \in J\}$ are compact in the uniform topology.

\begin{lemma}
The stretch map $\str_{x}(\mu, t)$ is  $C^1$ with respect to $t$ and piecewise $C^1$ with respect to $x$ near any point $x \in \teich(S)$.
\end{lemma}
\begin{proof}
The stretch ray $\str_x(\mu,t)$ is shown to be $C^1$ with respect to $t$ in \cite{ThM}.
In fact, it is analytic, as was shown in \cite{PW}.

Fix a maximal chain-recurrent geodesic lamination $\mu$.
Then each point $x\in \teich(S)$ defines a horocyclic foliation $F_x$ transverse to $\mu$.
The space of measured foliations is decomposed into finitely many closed conical regions (called linear fragments in \cite{ThM}), and the transition function is piecewise linear.
The decomposition can be taken independently of  the point $x$ in $\teich(S)$.
Restricted to each region, the correspondence between horocyclic foliations and points in Teichm\"{u}ller space is $C^1$ and its differential is $C^1$ with respect to $t$.
(These properties follow from the differentiability of the cataclysm coordinates and the expressions of stretch vectors in terms of the differential of cataclysm coordinates. See \cite[\S9]{ThM}.)
This implies that $\str_x(\mu,t)$ is piecewise $C^1$ with respect to $x$.
\end{proof}
%Therefore, to verify $\{str_{(\cdot)}(\mu, \cdot) \mid \mu \in \mcr(\lambda^\CR)\}$ satisfies the hypotheses of \cref{flows}, we have only to prove the following compactness.
%
%
%The compactness of the set of stretch vectors $\{\v_x(\mu)\mid \mu \in \mcr(\lambda^\CR)\}$ follows from the following lemma.
Now we turn to compactness.
\begin{lemma}
\label{closed}
%For any chain  geodesic lamination $\lambda$, the set of stretch vectors $\{\v_x(\mu)\mid \mu \in \mcr(\lambda^\CR)\}$ is compact.
%x_kは動かす必要なないのでは？
For any two compact subsets $K$ of $\teich(S)$ and $J \subset [0,\infty)$, the family of geodesics  $\mathcal{SR}:=\{\str_{x}(\mu, t) \mid \mu \in \mcr(\lambda^\CR); x \in K, t \in J\}$ and the family of associated tangent vectors $\{\frac{d}{d t}(\str_x(\mu,t)) \mid \mu \in \mcr(\lambda^\CR); x \in K, t \in J\}$ are compact with respect to the uniform topologies on the sets of maps from $K \times J$ to $\teich(S)$ and $T(\teich(S))$ respectively. 
\end{lemma}
\begin{proof}
Since the space of $C^1$-rays in $\teich(S)$ and the space of their tangent vectors are metrisable, it suffices to prove  sequential compactness.
%Since we are working in $T_x \teich(S)$, we have only to prove the sequential compactness.
Let $(\mu_i)$ be a sequence of maximal chain-recurrent geodesic laminations in $\mcr(\lambda^\CR)$, and consider a sequence of stretch rays $\big(\str_{x_k}(\mu_k, t)\big)_{k \in \naturals}$ with $x_k \in K$. 
By the Arzel\`{a}-Ascoli theorem, passing to a subsequence, the sequence $\big(\str_{x_k}(\mu_k, t)\big)$ converges to a geodesic ray $r_x(t)$, starting at $x \in K$ to which $(x_k)$ converges, uniformly on any compact set of $[0, \infty)$.

By \cref{Hausdorff closed}, passing to a subsequence, $(\mu_i)$ converges to a geodesic lamination $\mu$ in $\mcr(\lambda^\CR)$.
Recall that $r_x(t)$ is the limit of $\str_{x_k}(\mu_k, t)$ as $k\to \infty$.
Since $\mu(x_k, \str_{x_k}(\mu_k,t))$ is  $\mu_k$, by \cref{continuity}, $\mu(x, r_x(t))$ contains $\mu$.
On the other hand, since $\mu$ is maximal, we have $\mu(x,r_x(t))=\mu$.
The maximality of $\mu$ also implies that $r_x(t)=\str_x(\mu,t)$.
This means that $r_x$ is a stretch ray along $\mu$.
Thus we have shown that the set $\mathcal{SR}$ is compact with respect to the uniform topology.

What remains to show is that the set $ \{\frac{d}{d t}(\str_x(\mu,t)) \mid \mu \in \mcr(\lambda^\CR); x \in K, t \in J\}$ is also compact with respect to the uniform topology.
Consider a sequence $\big(\frac{d}{d t}(\str_{x_k}(\mu_k,t_k))\big)$ in $\{\frac{d}{d t}(\str_x(\mu,t)) \mid \mu \in \mcr(\lambda^\CR); x \in K, t \in J\}$.
For $t_k \in [0,\infty)$, the tangent vector to $\str_{x_k}(\mu_k, t_k)$ is the stretch vector $\v_{\mu_k}(\str_{x_k}(\mu_k, t_k))$.
Since $(t_k)$ lies in $J$, it converges to $t_\infty \in J$ up to passing to a subsequence.
Then, as was shown in the preceding paragraph, passing to a subsequence, the stretch rays $\str_{x_k}(\mu_k, t)$ converge to a stretch ray $\str_{x_\infty}(\mu_\infty, t)$, where $x_\infty$ is the limit of $(x_i)$ in $\teich(S)$, and $\mu_\infty$ is the Hausdorff limit of $(mu_k)$.

We need to show that the tangent vectors $\frac{d}{d t}(\str_{x_k}(\mu_k,t_k))$ converge to $\frac{d}{dt}\str_{x_\infty}(\mu_\infty, t_\infty)$.
By translating parameters $t$, we can assume that $t_k=t_\infty=0$.
We consider the cataclysm map $\cat \colon U(\subset \mcr \times \MF(S)) \to \teich(S)$, and its differential with respect to the second factor, $d\cat \colon \mcr \times T(\mf(S)) \to T(\teich(S))$, and regard $\frac{d}{d t}(\str_{x_k}(\mu_k,t_k))$ and $\frac{d}{dt}\str_{x_\infty}(\mu_\infty, t_\infty)$ as contained in $\mcr \times T(\MF(S))$.
Then our claim is equivalent to the condition that the sequence $\big((\mu_k, F_{x_k}(\mu_k))\big)$ converges to $(\mu_\infty, F_{x_\infty}(\mu_\infty))$, which holds by the continuity of horocyclic foliations with respect to both $\teich(S)$ and $\mcr$.
%horocyclic foliationの収束を使って議論すると良い．

%
%On the other hand let $w_k$ be the tangent vector $\displaystyle\frac{\partial}{\partial t}\str_{x_k}(\mu_k, t_i)$.

%Passing to a subsequence $\{w_k\}$ converges to a unit tangent vector $w_\infty$.
%There is a harmonic stretch ray $r_\infty(t)$ tangent to $w_\infty$.
%Since the stretch rays are analytic, their tangent vectors at $t_k$ converge to the tangent vector of $\str_{x_\infty}(\mu_\infty, t)$ at $t_\infty$.
%Since $\{t_k\}$ lies in $J$, it converges to $t \in J$ passing to a subsequence.
%Also, since $\{\lambda_k\}$ lies in $\mcr(\lambda^\CR)$, passing to a subsequence $\{\lambda_k\}$ converges to a geodesic lamination $\mu \in \mcr(\lambda^\CR)$.
%Then there is a simple closed curve $\gamma_k$ which is within the Hausdorff distance $1/k$ from $\mu_k$ and $\displaystyle \frac{d\ell_{\gamma_k}(\v_{\mu_k}(\str_{x_k}(\mu_k, t_k)))}{\len_{x_k}(\gamma_k)}\geq 1-\frac{1}{k}$ if $k$ is sufficiently large.
%Then $\{\gamma_k\}$ converges to $\mu$ in the Hausdorff topology and $\mu$ is the infinitesimally most stretch lamination of the limit $\v$ of a subsequence of the vectors $\v_{\mu_k}(x_k)$.
%\cref{maximal then stretch} implies that $\v$ is  the stretch vector $\v_\mu(\str_{x}(\mu, t))$, which is the tangent vector of $\str_x(\mu,t)$ at $t=0$.
Thus we have shown the desired sequential compactness.
%Thus we have shown that the stretch rays $\str_{x_k}(\mu_k, t)$ converges to $\str_x(\mu, t)$ including their tangent vectors uniformly in $J$, and hence the convergence is indeed with respect to the $C^1$-topology.
\end{proof}

\begin{proof}[Proof of \cref{BN}]
Let $v$ be a unit tangent vector at $x \in \teich(S)$.
By assumption,  $v$ is a tangent vector of a harmonic stretch ray along some geodesic lamination $\lambda$.
Since such a harmonic stretch map maximally stretches only geodesic laminations contained in $\lambda$, we have $\mu(x, \alpha(t))=\lambda^\CR$.
Therefore, $\alpha(t)$ can be expressed as a finite concatenation of stretch paths along geodesic laminations contained in $\mcr(\lambda^\CR)$, as explained in  \cref{Thurston's stretch}.
Since we have already proved that the set $\mathcal{SR}$ can be taken to be $\{f_i\}$ in \cref{flows}, we see that $v=\dot{\alpha}(0)$ can be expressed as a linear combination of stretch vectors $\alpha_1 \v_x(\mu_1)+ \dots + \alpha_n\v_x(\mu_n)$ along maximal chain-recurrent geodesic laminations $\mu_1, \dots , \mu_n$ containing $\lambda^\CR$.
\end{proof}
%Let $\chi(t)$ be a harmonic stretch ray such that $\dot{\chi}(0)=v$, which is guaranteed to exist by \cref{PW}, stretching $\lambda$.
%%For any $t \in [0,\infty)$, there is a geodesic in $(\teich(S), \dth)$ connecting $x$ to $\chi(t)$, which is a concatenation of stretch passes along maximal chain-recurrent geodesic laminations.
%
%\end{proof}
%

\section{Convex structures}
\subsection{Statements of results}
Let $(\teich(S), \dth)$ be the Teichm\"{u}ller space of $S$ equipped with Thurston's metric. For each point $x \in \teich(S)$, we consider the tangent space $T_x \teich(S)$ at $x$ and its unit sphere $\mathcal S_x$ with respect to the norm $\Vert \cdot \Vert_{\mathrm{Th}}$.
We have the following characterisation of the faces of $\mathcal S_x$;

\begin{theorem}
\label{face}
For any face $F$ in $\mathcal S_x$, there is a unique chain-recurrent geodesic lamination $\lambda$ such that $F$ is expressed as 
\begin{equation*}
\begin{split}
&F=F_\lambda:=\{\alpha_1 \v_x(\lambda_1)+ \dots +\alpha_k \v_x(\lambda_k) \mid \lambda_1, \dots , \lambda_k \text{ are maximal chain-recurrent } \\
&\text{geodesic laminations containing } \lambda; \  \alpha_1, \dots , \alpha_k > 0, \alpha_1+ \dots + \alpha_k=1\}.
\end{split}
\end{equation*}
Conversely, for any chain-recurrent geodesic lamination $\lambda$, the set $F_\lambda$ defined above constitutes a face of $\mathcal S_x$.
Two faces $F_\lambda$ and $F_\mu$ associated with chain-recurrent geodesic laminations $\lambda$ and $\mu$ intersect if and only if $\lambda$ and $\mu$ do not intersect transversely.
In this case, we have $F_\lambda \cap F_\mu=F_{\lambda \cup \mu}$.
\end{theorem}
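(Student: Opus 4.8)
The plan is to run everything through \cref{BN} and \cref{unique stretch}, using the largest infinitesimally most stretched lamination of a unit vector as the bridge between the convex geometry of $\mathcal S_x$ and the order structure of chain-recurrent laminations. Write $\Lambda(v)$ for the largest infinitesimally most stretched lamination of a unit vector $v$ (the lamination introduced after \cref{unique stretch}). The main elementary lemma is: \emph{if $\lambda$ is any chain-recurrent sublamination of $\Lambda(v)$, then $v\in F_\lambda$}. Indeed, write $v=\sum_i\alpha_i\v_x(\mu_i)$ as in \cref{BN} (with $\alpha_i>0$ and each $\mu_i$ maximal chain-recurrent); since $\lambda\subseteq\Lambda(v)$ it is infinitesimally most stretched by $v$, and approximating the minimal components of $\lambda$ by simple closed geodesics $c_j$, the fact that each term $\frac{d\ell_{c_j}(\v_x(\mu_i))}{\len_x(c_j)}$ is $\le 1$ while the convex combination tends to $1$ forces $\frac{d\ell_{c_j}(\v_x(\mu_i))}{\len_x(c_j)}\to 1$ for every $i$; hence $\lambda$ is infinitesimally most stretched by each $\v_x(\mu_i)$, so $\lambda\subseteq\mu_i$ by \cref{unique stretch} (using that maximal chain-recurrent laminations are complete to take care of isolated leaves), and therefore $v=\sum_i\alpha_i\v_x(\mu_i)\in F_\lambda$. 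Dually, if $v$ is extreme the decomposition of \cref{BN} must be trivial, so the extreme points of $\mathcal S_x$ are exactly the $\v_x(\mu)$ with $\mu$ maximal chain-recurrent; since $\mu\mapsto\v_x(\mu)$ is continuous and $\mcr(\cdot)$ is Hausdorff-compact (\cref{Hausdorff closed}, \cref{closed}), Carathéodory's theorem gives $F_\lambda=\mathrm{conv}\{\v_x(\mu):\mu\in\mcr(\lambda)\}$, every face $F$ equals $\mathrm{conv}\{\v_x(\mu):\mu\in M_F\}$ with $M_F:=\{\mu\text{ maximal c.r.}:\v_x(\mu)\in F\}$ compact, and, putting $\lambda_F:=\bigl(\bigcap_{\mu\in M_F}\mu\bigr)^\CR$, we get $M_F\subseteq\mcr(\lambda_F)$, so $F\subseteq F_{\lambda_F}$.

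\textbf{Each $F_\lambda$ is a face, and the intersection statements.} That $F_\lambda\subseteq\mathcal S_x$: $\|\sum_i\alpha_i\v_x(\mu_i)\|_\th\le 1$ since $\frac{d\ell_c(\cdot)}{\len_x(c)}$ is linear and each $\v_x(\mu_i)$ is a unit vector, while for simple closed curves $c_j\to\lambda$ one has $\frac{d\ell_{c_j}(\v_x(\mu_i))}{\len_x(c_j)}\to 1$ (because $\mu_i\supseteq\lambda$, so $\lambda$ is ratio-maximising along the stretch ray $\str_x(\mu_i,\cdot)$), whence the norm is exactly $1$. If $v\in F_\lambda$ and $v=tu+(1-t)w$ with $u,w\in\mathcal B_x$, $t\in(0,1)$, the same curves $c_j$ force $\frac{d\ell_{c_j}(u)}{\len_x(c_j)}\to 1$ and likewise for $w$, so $u,w\in\mathcal S_x$ with $\lambda\subseteq\Lambda(u)\cap\Lambda(w)$, and by the lemma above $u,w\in F_\lambda$. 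For the intersection claims: if $\lambda,\mu$ cross transversely a vector in $F_\lambda\cap F_\mu$ would have both $\lambda$ and $\mu$ inside its $\Lambda$, impossible; if they do not cross then (as $\lambda,\mu$ are chain-recurrent) $\lambda\cup\mu$ is chain-recurrent with $\mcr(\lambda\cup\mu)=\mcr(\lambda)\cap\mcr(\mu)$, so $F_{\lambda\cup\mu}\subseteq F_\lambda\cap F_\mu$ and this set is nonempty, while a vector $v\in F_\lambda\cap F_\mu$ written via \cref{BN} through completions of $\lambda$ has $\mu\subseteq\Lambda(v)$, hence each such completion contains $\lambda\cup\mu$ and $v\in F_{\lambda\cup\mu}$.

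\textbf{Every face is an $F_\lambda$ — the main obstacle.} By the intersection formula the family $\{\lambda':F\subseteq F_{\lambda'}\}$ is nonempty (it contains $\lambda_F$) and closed under union, so has a maximum $\lambda^*$, and one checks $\lambda^*=\lambda_F$; uniqueness follows because $F$ determines $M_F$, which equals $\mcr(\lambda^*)$ once $F=F_{\lambda^*}$ is known, and $\mcr(\cdot)$ determines its argument among chain-recurrent laminations (distinct ones admit completions distinguishing them). Everything reduces to proving the equality $F=F_{\lambda^*}$, i.e.\ ruling out $F\subsetneq F_{\lambda^*}$: then $F$ is a proper face of the convex body $F_{\lambda^*}$, hence contained in a proper exposed face $E=F_{\lambda^*}\cap H$ of it, and it is enough to show $E$ is again of the form $F_{\lambda'}$, necessarily with $\lambda'\supsetneq\lambda^*$, contradicting maximality of $\lambda^*$. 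Equivalently, one must show that every $v_0\in F_{\lambda^*}$ with $\Lambda(v_0)^\CR=\lambda^*$ lies in the relative interior of $F_{\lambda^*}$ — a relative interior vector of $F$ has this property and cannot then lie in a proper face of $F_{\lambda^*}$. I expect this last point to be the hard step, since it is the only one that is not formal: it must use the concrete infinitesimal geometry — Thurston's cataclysm coordinate chart attached to a maximal lamination containing $\lambda^*$, as in the proof of \cref{maximal then stretch}, together with the explicit description of how the stretch vectors $\v_x(\mu)$, $\mu\in\mcr(\lambda^*)$, are affinely situated relative to each other, these affine relations being governed by the complementary regions of $\lambda^*$. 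This is where the infinitesimal observations of \cite{Infinitesimal} must be brought in.
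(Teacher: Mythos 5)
Your first two paragraphs are essentially sound and run parallel to steps (1)--(4) of the paper's proof: the norm computation, the convexity argument forcing each term of a convex combination to stretch $\lambda$ at unit speed, and the intersection statements all match the paper's reasoning, and your repackaging via extreme points is a nice shortcut --- from \cref{BN} the extreme points of the unit ball are among the $\v_x(\mu)$ with $\mu$ maximal chain-recurrent, so Minkowski's theorem gives $F=\mathrm{conv}\{\v_x(\mu):\mu\in M_F\}$ and hence $F\subseteq F_{\lambda_F}$, replacing the paper's ``minimal $F_\lambda$ containing $F$'' setup. One point you elide here: you need $\lambda_F=(\bigcap_{\mu\in M_F}\mu)^\CR$ to be \emph{nonempty} before $F_{\lambda_F}$ even makes sense, and two maximal chain-recurrent laminations can a priori share no leaves. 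The paper devotes its step (5) to exactly this, via the midpoint trick: $(\v_x(\mu_1)+\v_x(\mu_2))/2$ lies in the face, hence on $\mathcal S_x$, hence has a largest infinitesimally most stretched lamination, whose minimal components must lie in $\mu_1\sqcap\mu_2$ by \cref{unique stretch}. Your ``main elementary lemma'' applied to a relative-interior point of $F$ would yield the same conclusion, but you should say so.

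The genuine gap is the one you flag yourself: you never prove $F=F_{\lambda^*}$, i.e.\ you never rule out that $F$ is a proper subface of the minimal $F_{\lambda^*}$ containing it, and you speculate that this requires cataclysm coordinates and the infinitesimal machinery of \cite{Infinitesimal}. It does not; the paper's step (6) closes this by a Hausdorff-limit argument entirely within the tools already on the table. In outline: if $F\subsetneq F_{\lambda^*}$ then $F$ lies on the boundary of $F_{\lambda^*}$, and since by minimality it is not of the form $F_{\lambda^*}\cap F_\mu=F_{\lambda^*\cup\mu}$, it must be contained in a limit of faces $F_{\mu_i}$ with $\mu_i\not\subseteq\lambda^*$, the limit avoiding the relative interior of $F_{\lambda^*}$. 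The Hausdorff limit $\mu_\infty$ of the $\mu_i$ is chain-recurrent and is infinitesimally most stretched by an interior point $v$ of $F$, hence by \cref{unique stretch} cannot cross any of the maximal laminations appearing in the expression of $v$, so $\mu_\infty\subseteq\lambda^*$. But then for \emph{any} maximal chain-recurrent $\hat\lambda\supseteq\lambda^*$ one can enlarge $\mu_i$ inside $F_{\mu_i}$ to $\hat\mu_i$ converging to $\hat\lambda$, so $\v_x(\hat\lambda)$, and hence every convex combination of such vectors --- i.e.\ all of $F_{\lambda^*}$, including its relative interior --- lies in the limit of the $F_{\mu_i}$, a contradiction. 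So the missing step is a compactness/continuity argument for the assignment $\mu\mapsto\v_x(\mu)$ (\cref{Hausdorff closed} and \cref{closed}), not new infinitesimal geometry; as written, your proposal establishes that every face is \emph{contained in} some $F_\lambda$ but not that it \emph{equals} one, which is the heart of the theorem.
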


\begin{remark}
\label{characterisation}
By \cref{linear combination}, every vector in $F_\lambda$ infinitesimally most stretches $\lambda$.
Conversely, by \cref{BN}, every vector that infinitesimally most stretches $\lambda$ must be contained in $F_\lambda$.
For any chain-recurrent geodesic lamination $\mu$ properly containing $\lambda$, we can find a maximal chain-recurrent geodesic lamination which does not contain $\mu$.
Therefore $\lambda$ is the largest chain-recurrent geodesic lamination that is infinitesimally stretched by {\em all} vectors contained in $F_\lambda$.
\end{remark}

%We shall call \emph{unmeasured lamination} a geodesic lamination which is the underlying lamination of a measured geodesic lamination.

\begin{theorem}
\label{exposed}
A face $F$ is exposed if and only if the lamination $\lambda$ given in \cref{face} is an unmeasured lamination.
\end{theorem}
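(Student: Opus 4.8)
The plan is to pass to the dual picture. The defining formula for Thurston's norm extends from simple closed curves to measured laminations: for $v\in T_x\teich(S)$,
\[
\Vert v\Vert_\th=\max_{\nu\in\PML(S)}\phi_\nu(v),\qquad \phi_\nu(v):=\frac{d\ell_\nu(v)}{\len_x(\nu)},
\]
where $\phi_\nu$ depends only on the projective class of $\nu$, continuously so, and the maximum is attained by compactness of $\PML(S)$. Hence each $\phi_\nu$ satisfies $\phi_\nu\le\Vert\cdot\Vert_\th$, so $H_\nu:=\{\phi_\nu=1\}$ is a support hyperplane of the unit ball $\mathcal B_x$; it meets $\mathcal S_x$ since $\phi_\nu(\v_x(\lambda))=1$ whenever $\lambda\in\mcr(\mathrm{supp}\,\nu)$, because a stretch ray along a complete lamination scales, by the factor $e^t$, every measured lamination it carries. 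The crux is the identity
\[
H_\nu\cap\mathcal S_x=F_{\mathrm{supp}\,\nu}\qquad(\ast)
\]
for each $\nu\in\ML(S)$: that $\phi_\nu$ exposes exactly the face attached to $\mathrm{supp}\,\nu$. The inclusion $\supseteq$ is immediate from the scaling fact just used together with the description of $F_{\mathrm{supp}\,\nu}$ in \cref{face}. For $\subseteq$, if $v\in\mathcal S_x$ and $\phi_\nu(v)=1$, then $\nu$ attains the maximum; writing $d\ell_\nu(v)$ as the integral of the pointwise infinitesimal stretch rate of $v$ against the length-times-transverse measure of $\nu$, and using that this rate is everywhere at most $\Vert v\Vert_\th=1$, one gets that the rate equals $1$ on a set of full measure, hence on all of $\mathrm{supp}\,\nu$ by full support and continuity; thus $\mathrm{supp}\,\nu$ is infinitesimally most stretched by $v$, so it is contained in the largest infinitesimally most stretched lamination of $v$, which by \cref{face} puts $v$ in $F_{\mathrm{supp}\,\nu}$.

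Granting $(\ast)$, the \lq\lq if\rq\rq\ direction is immediate: if $\lambda$ is an unmeasured lamination, write $\lambda=\mathrm{supp}\,\mu$ with $\mu\in\ML(S)$; then $(\ast)$ gives $F_\lambda=H_\mu\cap\mathcal S_x$, an exposed face.

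For the converse, let $F_\lambda$ be exposed, say $F_\lambda=H\cap\mathcal S_x$ for a support hyperplane $H$. Writing $H=\{\psi=c\}$ with $\psi\le c$ on $\mathcal B_x$ and normalizing, one finds $c>0$ (since $0\in\mathrm{int}\,\mathcal B_x$), so we may take $c=1$ and $\psi\le\Vert\cdot\Vert_\th$. Because $\Vert\cdot\Vert_\th=\max_\nu\phi_\nu$, the compact convex set $\{\psi':\psi'\le\Vert\cdot\Vert_\th\}$ has the same support function as $\mathrm{conv}\{\phi_\nu:\nu\in\PML(S)\}$ and hence equals it; so by Carath\'{e}odory $\psi=\sum_{k=1}^N t_k\phi_{\nu_k}$ with $t_k>0$, $\sum t_k=1$, $\nu_k\in\ML(S)$. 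For $v\in F_\lambda$ we get $1=\psi(v)=\sum_k t_k\phi_{\nu_k}(v)$ with each term $\le 1$, forcing $\phi_{\nu_k}(v)=1$ for all $k$; conversely $\phi_{\nu_k}(v)=1$ for all $k$ together with $v\in\mathcal S_x$ gives $\psi(v)=1$, i.e. $v\in F_\lambda$. By $(\ast)$ this reads $F_\lambda=\bigcap_{k=1}^N F_{\mathrm{supp}\,\nu_k}$. Since the intersection is non-empty, the $\mathrm{supp}\,\nu_k$ are pairwise non-transverse, and iterating the intersection formula of \cref{face} gives $F_\lambda=F_{\bigcup_k\mathrm{supp}\,\nu_k}$. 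By the uniqueness clause of \cref{face}, $\lambda=\bigcup_k\mathrm{supp}\,\nu_k=\mathrm{supp}(\nu_1+\dots+\nu_N)$, which is an unmeasured lamination.

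The principal obstacle is establishing $(\ast)$, especially its \lq\lq$\subseteq$\rq\rq\ half: upgrading \lq\lq$\nu$ attains the maximum in the dual formula\rq\rq\ to \lq\lq$\mathrm{supp}\,\nu$ is infinitesimally most stretched by $v$\rq\rq, and then feeding this into the face description of \cref{face} to land $v$ in $F_{\mathrm{supp}\,\nu}$. The measure-theoretic step (the stretch rate equals $1$ almost everywhere, hence everywhere on the support) and the precise way stretch maps scale carried measured laminations are where care is needed; the remaining ingredients — the identification $\{\psi'\le\Vert\cdot\Vert_\th\}=\mathrm{conv}\{\phi_\nu\}$, Carath\'{e}odory's theorem, and the intersection and uniqueness content of \cref{face} (built on \cref{BN}) — are soft.
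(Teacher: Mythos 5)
Your forward direction coincides with the paper's: both exhibit the exposing hyperplane as $\{d\log\ell_{(\lambda,m)}=1\}$ for a transverse measure $m$ of full support on $\lambda$. Your converse, however, takes a genuinely different route. The paper invokes Thurston's Theorem 5.1 to write the exposing hyperplane as $H_\mu$ for a \emph{single} measured lamination $\mu$, then argues by hand that $|\mu|$ contains every minimal component of $\lambda$ and that $\lambda$ can have no extra isolated leaves (producing a maximal chain-recurrent $\mu'$ crossing $\lambda$ as a contradiction). You instead identify the dual unit ball with $\mathrm{conv}\{\phi_\nu\}$, apply Carath\'eodory to the exposing functional, and reduce everything to the identity $(\ast)\colon H_\nu\cap\mathcal S_x=F_{\mathrm{supp}\,\nu}$ together with the intersection and uniqueness clauses of \cref{face}. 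This is an attractive reorganisation: it needs only the norm formula over $\ML$ rather than the full strength of Thurston's parametrisation of the dual sphere, and it dispenses with the case analysis on isolated leaves, at the price of concentrating all the analytic content into the inclusion $H_\nu\cap\mathcal S_x\subseteq F_{\mathrm{supp}\,\nu}$.

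That inclusion is where your argument has a genuine gap as written. You deduce from $\phi_\nu(v)=1$ that the pointwise infinitesimal stretch rate of $v$ equals $1$ $\nu$-a.e.\ by asserting that this rate is \emph{everywhere} at most $\Vert v\Vert_\th=1$. But the norm only bounds \emph{averages} of the rate over simple closed geodesics (equivalently over measured laminations); nothing in the definition prevents the pointwise first-variation integrand from exceeding $1$ at some points of a leaf while the averages stay below $1$, so the a.e.\ conclusion does not follow from the stated bound. The statement you need is nonetheless true and is provable with the paper's own tools: approximate $\nu$ by weighted simple closed curves $t_ic_i\to\nu$ in $\ML$, note that $d\log\ell_{c_i}(v)\to\phi_\nu(v)=1$ by continuity of $\mu\mapsto d\log\ell_\mu(v)$, and apply the Hausdorff-limit argument preceding \cref{unique stretch} to conclude that the Hausdorff limit of the $c_i$, which contains $\mathrm{supp}\,\nu$, is infinitesimally most stretched by $v$; since being infinitesimally most stretched passes to sublaminations, so is $\mathrm{supp}\,\nu$, and the contrapositive argument of step (2) of the proof of \cref{face} then places $v$ in $F_{\mathrm{supp}\,\nu}$. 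With that repair your proof is complete and correct.
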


%
%A convex sphere in a vector space has a well-defined decomposition into faces. 
%For $i=1,2$, let 
% $\mathcal S_i$ be a convex sphere in a vector space $\subset V_i$ . We say that $\mathcal S_1$ and $\mathcal S_2$ \emph{have the same combinatorial type} if there is a homeomorphism $\mathcal S_1\to \mathcal S_2$ which sends faces to faces.
% The following corollary will give an answer to a question suggested by Norbert A'Campo during a talk given by the second author.
%
%
%\begin{corollary}
%The combinatorial type of $\mathcal S_x$ is independent of $x$.
%\end{corollary}

\begin{theorem}
\label{extreme}
A point $v\in \mathcal S_x$ is an extreme point if and only if $v$ is a stretch vector along a maximal chain-recurrent geodesic lamination.
\end{theorem}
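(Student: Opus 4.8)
The plan is to reduce \cref{extreme} to \cref{face} via a standard fact of convex geometry: a point $v\in\mathcal S_x$ is an extreme point if and only if the singleton $\{v\}$ (a trivially proper convex subset of $\mathcal B_x$) is a face of $\mathcal S_x$. The implication ``$\{v\}$ a face $\Rightarrow$ $v$ extreme'' is immediate from the definition of a face together with $\mathcal S_x\subset\mathcal B_x$. For the converse implication one uses that if a boundary point $v\in\mathcal S_x$ lies in the relative interior of a segment $[u,w]\subset\mathcal B_x$, then by the line-segment principle for convex bodies the whole segment lies in $\mathcal S_x$; this also reconciles the paper's two conventions (endpoints in $\mathcal B_x$ in the definition of a face, endpoints in $\mathcal S_x$ in the definition of an extreme point). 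I would record this equivalence as a short observation in \cref{convex geometry}.

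Granting this reduction, the forward implication of \cref{extreme} is immediate. If $v$ is extreme then $\{v\}$ is a face, so by \cref{face} there is a chain-recurrent geodesic lamination $\lambda$ with $\{v\}=F_\lambda$. Since $\mcr(\lambda)$ is non-empty, choose a maximal chain-recurrent geodesic lamination $\lambda_1\in\mcr(\lambda)$; taking $k=1$ and $\alpha_1=1$ in the description of $F_\lambda$ given in \cref{face} gives $\vst_x(\lambda_1)\in F_\lambda=\{v\}$, so $v=\vst_x(\lambda_1)$ is a stretch vector along the maximal chain-recurrent geodesic lamination $\lambda_1$.

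For the converse, let $v=\vst_x(\lambda)$ with $\lambda$ maximal chain-recurrent. Being complete, $\lambda$ is maximal among all geodesic laminations, so $\mcr(\lambda)=\{\lambda\}$ and hence $F_\lambda=\{\vst_x(\lambda)\}=\{v\}$; since $F_\lambda$ is a face by \cref{face}, $v$ is extreme. One can also argue directly, bypassing \cref{face}: if $v=tu+(1-t)w$ with $u,w\in\mathcal S_x$ and $t\in(0,1)$, then for every simple closed curve $c$ both $d\ell_c(u)/\len_x(c)$ and $d\ell_c(w)/\len_x(c)$ are $\leq 1$ (as $\Vert u\Vert_\th=\Vert w\Vert_\th=1$) while their $t$-weighted average equals $d\ell_c(v)/\len_x(c)$; so if $\gamma_i\to\lambda$ in the Hausdorff topology with $d\ell_{\gamma_i}(v)/\len_x(\gamma_i)\to 1$, then $d\ell_{\gamma_i}(u)/\len_x(\gamma_i)\to 1$ and $d\ell_{\gamma_i}(w)/\len_x(\gamma_i)\to 1$, so $\lambda$ is infinitesimally most stretched by $u$ and by $w$; being complete, $\lambda$ is then the largest infinitesimally most stretched lamination of each, and \cref{maximal then stretch} together with the uniqueness noted in its proof gives $u=\vst_x(\lambda)=v=w$.

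I do not expect a genuine obstacle here: all the substance is already carried by \cref{BN}, \cref{face} and \cref{unique stretch}--\cref{maximal then stretch}. The only point that needs a little care is the convex-geometry reduction of the first paragraph, in particular the harmless mismatch between the definitions of ``face'' and ``extreme point'', which the line-segment observation settles.
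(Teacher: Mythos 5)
Your proof is correct, but it follows a genuinely different route from the paper's. The paper never invokes the convex-geometry equivalence ``$v$ is extreme if and only if $\{v\}$ is a face''; it argues both implications by hand. For the ``only if'' direction it uses only the covering $\mathcal S_x=\bigcup_\lambda F_\lambda$ to write $v=\alpha_1\vst_x(\lambda_1)+\dots+\alpha_n\vst_x(\lambda_n)$ and, when $n\ge 2$, perturbs two coefficients by $\pm\epsilon$ to exhibit $v$ as the midpoint of a nondegenerate segment in $F_\lambda$ (the nondegeneracy coming from $\vst_x(\lambda_1)\neq\vst_x(\lambda_2)$ via \cref{unique stretch}); you instead apply the full classification of \cref{face} to the singleton face $\{v\}$ and read off $v=\vst_x(\lambda_1)$. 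For the ``if'' direction the paper supposes $\vst_x(\lambda)=tu+(1-t)w$, places the segment $[u,w]$ inside some face $F_\mu$, expands $u$ and $w$ as combinations of stretch vectors and compares largest infinitesimally most stretched laminations to force $p=q=1$ and $u=w$; you simply note that $\mcr(\lambda)=\{\lambda\}$ when $\lambda$ is maximal chain-recurrent, so $F_\lambda=\{\vst_x(\lambda)\}$ is a singleton face, which is extreme by your reduction. What your version buys is brevity and a clean separation of the convex-geometry content (the ``extreme iff singleton face'' observation, which indeed deserves to be recorded in \cref{convex geometry}); the cost is that you lean on the full strength of \cref{face}, including the fact that each $F_\lambda$ is a face, which is established in step (2) of its proof rather than in its literal statement. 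Two small caveats: your reduction should be stated with the line-segment principle spelled out exactly as you indicate, since the paper's definitions of face and extreme point use $\mathcal B$ and $\mathcal S$ respectively; and your optional ``direct'' argument for the converse needs, at the step ``$\lambda$ is infinitesimally most stretched by $u$ and by $w$'', the same Hausdorff-continuity of the ratios $d\ell_{\gamma_i}/\len_x(\gamma_i)$ along sequences $\gamma_i\to\lambda$ that the paper uses implicitly in step (2) of the proof of \cref{face} together with \cref{maximal then stretch}; your primary argument avoids this issue entirely.
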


We introduce the following symbol for brevity of the exposition in the  proofs of \cref{face,exposed,extreme}:
we write $\lambda \sqcap \mu$ to denote the geodesic lamination consisting of all leaves shared by $\lambda$ and $\mu$.
%
%Before starting the proofs, we shall prove the following lemma.
%\begin{lemma}
%\label{intersection stretched}
%Let $v$ be a vector in $T_x \teich(S)$ expressed as $v=\alpha_1 \v_{\lambda_1}+\dots +\alpha_k \v_{\lambda_k}$ with $\lambda_1 \sqcap \dots \sqcap \lambda_k \neq \emptyset$ and $\alpha_1, \dots , \alpha_k >0; \alpha_1+ \dots +\alpha_k=1$.
%Then the maximal infinitesimally  stretched lamination of $v$ coincides with $\lambda_1 \sqcap \dots \sqcap \lambda_k$.
%\end{lemma}
%For a maximal geodesic lamination $\lambda$, we denote by $r_\lambda^x$ the stretching geodesic ray along $\lambda$ issued at $x$.
\subsection{Proof of \cref{face}}
%To prove \cref{face}, we use the following two results, one due to Pan--Wolf \cite{PW} and the other due to Bar-Natan \cite{BN}.
%
%\begin{theorem}[Pan--Wolf]
%\label{PW}
%Any vector $v \in \mathcal T_x \teich(S)$ is expressed as a harmonic stretch vector with respect to  a unique chain-recurrent geodesic lamination.
%\end{theorem}
%
%\begin{theorem}[Bar-Natan]
%\label{BN}
%A harmonic stretch vector $v$ with respect to a chain-recurrent geodesic lamination $\lambda$ can be expressed as a linear combination $\alpha_1 v_{\lambda_1}+ \dots + \alpha_k v_{\lambda_k}$, where $\lambda_1, \dots , \lambda_k$ are maximal chain-recurrent geodesic laminations containing $\lambda$.
%\end{theorem}
%Note that this expression of linear combination is far from unique.
%
%Now we start the proof of \cref{face}.
For a chain-recurrent geodesic lamination $\lambda$, we define
\begin{equation*}\begin{split}
F_\lambda=&
\{ \alpha_1 \v_x(\lambda_1)+ \dots +\alpha_k \v_x(\lambda_k) \mid \lambda_1, \dots , \lambda_k \text{ are maximal chain-recurrent}\\ &\text{geodesic laminations containing } \lambda; \ 
 \alpha_1, \dots , \alpha_k >0, \alpha_1+ \dots + \alpha_k=1\},\end{split}
\end{equation*}
and we denote by $\reals_+ F_\lambda$ its cone, that is, $\reals_+F_\lambda=\{r v \mid v \in F_\lambda, \, r \in [0,\infty)\}$.
By \cref{cor:PW,BN},  $T_x \teich(S)$ is the union of $\reals_+F_\lambda$, where $\lambda$ ranges over all chain-recurrent geodesic laminations.

The proof is divided into six steps.

(1) In the first step, we shall show that $F_\lambda$ lies on $\mathcal S_x$.
For $v \in F_\lambda$, we can write $v=\alpha_1 \v_x(\lambda_1)+ \dots + \alpha_k \v_x(\lambda_k)$.
Since $\v_x(\lambda_1), \dots , \v_x(\lambda_k)$ lie on $\mathcal S_x$ and since $\alpha_1+\dots + \alpha_k=1$, by the triangle inequality, we have $\Vert v\Vert_\th\leq 1$.
Let $\lambda_0$ be a minimal component of $\lambda$.
Then there exists a positive transverse measure $m$ supported on  $\lambda_0$.
Since all of $\v_x(\lambda_1), \dots , \v_x(\lambda_k)$ infinitesimally most stretch $\lambda_0$, by \cref{linear combination}, we see that so does their linear combination, and that $d\log\ell_{(\lambda_0, m)}(\alpha\v_x(\lambda_j)+\dots + \alpha\v_x(\lambda_k))=1$.
Therefore, we have $\Vert v \Vert_\th=\sup_{\mu \in \ml(S)\setminus \{0\}}d\log\ell_\mu(x) \geq 1$.
Thus we have shown that $\Vert v \Vert_\th=1$, and since $v$ is an arbitrary point on $F_\lambda$, we have $F_\lambda \subset \mathcal S_x$.

(2) In the second step, we show that $F_\lambda$ is a face.
Let $v=\alpha_1 \v_x(\lambda_1)+ \dots +\alpha_k \v_x(\lambda_k)$ be a vector in $F_\lambda$, and suppose that it is expressed as $v=t u+(1-t)w$ for $u, w \in \mathcal S_x$ and $t \in (0,1)$.
%Since $F_\lambda$ lies on $\mathcal S_x$, by the triangle inequality, it is easy to see that $x$ and $y$ must also lie on $\mathcal S_x$ then.
By \cref{cor:PW,BN}, $u$ and $w$ can be expressed as $u=\beta_1 \v_x(\mu_1) + \dots + \beta_m \v_x(\mu_m)$ and $w=\gamma_1 \v_x(\nu_1)+ \dots + \gamma_n \v_x(\nu_n)$, where both $\mu_1 \sqcap \dots \sqcap\mu_m$ and $\nu_1\sqcap \dots \sqcap \nu_n$ contain chain-recurrent geodesic laminations with $\beta_1, \dots, \beta_m >0, \beta_1+ \dots +\beta_m=1$, $\gamma_1, \dots \gamma_n >0$, and $\gamma_1+ \dots +\gamma_n=1$.
We shall show that $\lambda$ is contained in both $\mu_1 \sqcap \dots \sqcap\mu_m$ and $\nu_1\sqcap \dots \sqcap \nu_n$.

%We can see that $\mu_1 \sqcap \dots \mu_m \sqcap \nu_1 \sqcap \dots \sqcap \nu_n$ is non-empty as follows.
%Since $\Vert t u+(1-t)w \Vert_\th=1$,
%there is a measured lamination $L$ such that $d\log\ell_L(tu+(1-t)w)=1$.
%Since $\Vert t u \Vert_\th=t$ and $\Vert (1-t) w \Vert_\th=1-t$, this is possible only when $d\log \ell_L(tu)=t$ and $d\log\ell_L((1-t)w)=1-t$.
%Therefore $d\log\ell_L(u)=d\log\ell_L(w)=1$, which implies that the support of $L$ must be contained in all of $\mu_1, \dots , \mu_m; \nu_1, \dots , \nu_n$.

Suppose that $\lambda$ is not contained in either $\mu_1 \sqcap \dots \sqcap \mu_m$ or $\nu_1 \sqcap \dots \sqcap \nu_n$; say $\mu_1 \sqcap \dots \sqcap\mu_m$.
Then there exists a $j$ such that $\mu_j$ does not contain $\lambda$.
By \cref{unique stretch}, it follows that $\lambda$ is not infinitesimally most  stretched by $\v_x(\mu_j)$.
Then, as was shown in the proof of \cref{unique stretch}, there exist a harmonic stretch ray $m_t:=r_{\mu_j}(t)$ tangent to $\v_x(\mu_j)$ at $t=0$, and a harmonic stretch map $f_t \colon (S, x) \to (S, m_t)$ such that $f_t|\lambda$ is stretched at a speed strictly less than $1$.
Since $\mu_j$ is stretched at most at speed $1$ by harmonic stretch maps for all vectors $\v_x(\mu_1), \dots, \v_x(\mu_m)$ other than $\v_x(\mu_j)$ and $\v_x(\nu_1), \dots , \v_x(\nu_n)$, we see that $\lambda$ cannot be  infinitesimally most stretched by $t(\beta_1 \v_x(\mu_1) + \dots + \beta_m \v_x(\mu_m))+(1-t)(\gamma_1 \v_x(\nu_1)+ \dots + \gamma_n \v_x(\nu_n))$.
Since  $\lambda$ is infinitesimally most stretched by $v=tu+(1-t)w$  (\cref{linear combination}), this is a contradiction. 
%This means that for a sequence of simple closed geodesics $\gamma_i$ converging to $\lambda$ in the Hausdorff topology, $\displaystyle\limsup_{i \to \infty}\frac{d\log\ell_{\gamma_i}(\v_x(\mu_j))}{\len_x(\gamma_i)} < 1$.
%This implies that $\displaystyle\limsup_{i \to \infty}\frac{d\log\ell_{\gamma_i}(w)}{\len_x(\gamma_i)}<1$, and hence $\displaystyle\limsup_{i \to \infty}\frac{d\log\ell_{\gamma_i}(v)}{\len_x(\gamma_i)} <1$ in turn, contradicting the assumption that $v$ lies in $F_\lambda$.
%Let $r_\lambda^y(t), s_\mu^y(t), r_\nu^y(t)$ denote geodesic rays starting from $y \in \teich(S)$ which are tangent to $\alpha_1v_{\lambda_1}^y+ \dots +\alpha_kv_{\lambda_k}^y$, $\beta_1 v_{\mu_1}^y + \dots + \beta_m v_{\mu_m}^y$ and $\gamma_1 v^y_{\nu_1}+ \dots + \gamma_n v^y_{\nu_n}$ respectively.
%Then for small $t$,  the lamination $\lambda$ is not contained in the maximal length-maximising chain-recurrent lamination of $s^x_\mu(t)$.
%For small $t >0$, the point $s_\lambda^x(t)$ can be reached from $x$ by a concatenation of $r_\mu^x(t')$ and $r_\nu^{s_\mu^x(t')}(t')$, with  $t'+t''=t$.
%Since $\lambda$ does not lie in the maximal length-maximising lamination of $r_\mu^x(t')$, it cannot lie in that of $r_\lambda^x(t)$.
%This contradicts the assumption that $r_\lambda^x$ is tangent to $\alpha_1 v^x_{\lambda_1}+ \dots +\alpha_k v^x_{\lambda_k}$.
Thus we have shown that both $u$ and $w$ lie in $F_\lambda$. This proves that $F_\lambda$ is a face.

(3) Next we show the uniqueness of the chain-recurrent geodesic lamination $\lambda$.
Suppose that  $F_\lambda=F_\mu$ for another chain-recurrent geodesic lamination $\mu$.
Then, by \cref{characterisation}, both $\lambda$ and $\mu$ are equal to the largest chain-recurrent geodesic lamination that is infinitesimally most stretched by all vectors in $F_\lambda=F_\mu$, which means that $\lambda=\mu$.
%Then for any linear combination $\alpha_1\v_x(\lambda_1)+ \dots \alpha_k \v_x(\lambda_k)$ in the definition of $F_\lambda$ also lies  in $F_\mu$.
%By the same argument as in (2), we see that $\lambda_1 \sqcap \dots \sqcap \lambda_k$ contains $\mu$.
%This implies $\mu \subset \lambda$.
%By interchanging the roles of $\mu$ and $\lambda$, we also have $\lambda \subset \mu$.

(4) We prove the last sentence of our statement.
Suppose that $F_\lambda$ and $F_\mu$ have non-empty intersection.
Then any vector  $v$ in $F_\lambda\cap F_\mu$ infinitesimally most  stretches both $\lambda$ and $\mu$.
By \cref{unique stretch}, this implies that $\lambda$ and $\mu$ cannot intersect transversely, hence $\lambda \cup \mu$ is also infinitesimally most  stretched by $v$.
Therefore $F_\lambda \cap F_\mu \subset F_{\lambda \cup \mu}$.
Conversely, by definition, every vector  $F_{\lambda \cup \mu}$ is contained in both $F_\mu$ and $F_\lambda$.
Thus we have $F_\lambda \cap F_\mu=F_{\lambda \cup \mu}$.

(5) We show that  if $F_\lambda \cup F_\mu$ is contained in a face, then $\lambda \sqcap \mu \neq \emptyset$ and both $F_\lambda$ and $F_\mu$ are contained in $F_{(\lambda \sqcap \mu)^\CR}$.
Suppose that $F_\lambda\cup F_\mu$ is contained in a face $F$.
Take two maximal chain-recurrent geodesic laminations $\lambda_0$ containing $\lambda$ and $\mu_0$ containing $\mu$ in such a way that neither $\lambda_0 \setminus \lambda$ nor $\mu_0 \setminus \mu$ contains minimal components.
Since both $\v_x(\lambda_0) \in F_\lambda$ and $\v_x(\mu_0) \in F_\mu$ lie in $F$, so does their midpoint $v_m=(\v_x(\lambda_0)+\v_x(\mu_0))/2$.

Let $\nu$ be the largest  infinitesimally most stretched lamination for $v_m$.
By the same argument as in (2), we see that $\nu$ must be infinitesimally most stretched by both $\v_x(\lambda_0)$ and $\v_x(\mu_0)$.
By \cref{unique stretch}, $\nu$ can intersect transversely neither $\lambda_0$ nor $\mu_0$.
Since both $\lambda_0$ and $\mu_0$ are maximal, $\nu$ is contained in $\lambda_0 \sqcap \mu_0$.
Since neither $\lambda_0 \setminus \lambda$ nor $\mu_0 \setminus \mu$ contains a minimal component, the minimal components  of $\nu$ are all contained in $\lambda \sqcap \mu$.
Thus, we have shown that $\lambda \sqcap \mu \neq \emptyset$.
Since $\lambda \sqcap \mu$ is non-empty, so is $(\lambda \sqcap \mu)^\CR$, and it is straightforward that $F_\lambda \subset F_{(\lambda \sqcap \mu)^\CR}$ and $F_\mu \subset F_{(\lambda \sqcap \mu)^\CR}$ .

(6) Finally, we shall  show that every face $F$ is expressed as $F_\lambda$ for some chain-recurrent geodesic  lamination $\lambda$.
Since $\mathcal S_x=\cup_{\lambda} F_\lambda$ and by (5), we can find some $F_\lambda$ containing $F$.
Since the intersection of two faces $F_\lambda$ and $F_\mu$ is either empty or equal to $F_{\mu\cup \lambda}$, there is a minimal $F_\lambda$ containing $F$.
We shall prove that $F=F_\lambda$.

Suppose, seeking a contradiction, that $F$ is a proper subface of $F_\lambda$.
Then $F$ lies on the boundary of $F_\lambda$, and there is a stretch vector in $F_\lambda$ not contained in $F$.
Since $F$ is contained in $F_\lambda$, every vector $v$ in $F$ can be expressed as a linear combination $\alpha_1\v_x(\lambda_1)+\dots +\alpha_p \v_x(\lambda_p)$ for some positive $\alpha_1, \dots , \alpha_p$ with $\alpha_1+\dots + \alpha_p=1$, and $\lambda_1, \dots , \lambda_p \supset \lambda$.
If $F$ is also contained in $F_\mu$ for some chain-recurrent geodesic  lamination $\mu$ which is not contained in $\lambda$, then, by (4), $F$ is contained in $F_{\lambda \cup \mu}$. But this contradicts the minimality of $F_\lambda$.
Therefore, for an interior point $v$ of $F$ and its expression as above, we have $(\lambda_1 \sqcap \dots \sqcap \lambda_p)^\CR=\lambda$.

Since $F$ lies on the boundary of $F_\lambda$ whereas it is not the intersection of $F_\lambda$ with another face $F_\mu$, the only possibility is that $F$ is contained in  a limit of $F_{\mu_i}$ for some chain-recurrent geodesic laminations $\mu_i$ not contained in $\lambda$ in such a way that the interior of $F_\lambda$ is disjoint from the limit.
Let $\mu_\infty$ be the Hausdorff limit of the $\mu_i$, up to passing to a subsequence. The limit is also chain-recurrent since it is a Hausdorff limit of chain-recurrent geodesic laminations.
Then, as was shown in the proof of \cref{closed}, $\mu_\infty$ is infinitesimally most  stretched by $v$.
By \cref{unique stretch}, $\mu_\infty$ can intersect none of $\lambda_1, \dots, \lambda_p$ transversely, hence $\mu_\infty$ is contained in $\lambda$.

Now let $\hat \lambda$ be any maximal chain-recurrent geodesic lamination containing $\lambda$.
Since the Hausdorff limit $\mu_\infty$ is contained in $\lambda \subset \hat \lambda$ and since $\hat \lambda$ is chain-recurrent, we can enlarge $\mu_i$ to $\hat \mu_i \in F_{\mu_i}$ in such a way that the Hausdorff limit of $\hat\mu_i$ is $\hat \lambda$.
Then $\v_x(\hat\lambda)$ is also a limit point of $F_{\mu_i}$.
Since every point in  $F_\lambda$ is a linear combination of such vectors, we see that an interior point of $F_\lambda$  is contained in the limit of $F_{\mu_i}$,  contradicting our assumption.
Thus we have proved that $F=F_\lambda$.

%By assumption, every vector $v$ on $F$ can be expressed as a linear combination $\alpha_1\v_{\lambda_1}+\dots +\alpha_p \v_{\lambda_p}$ with positive $\alpha_1, \dots , \alpha_p$ with $\alpha_1+\dots + \alpha_p=1$, and $\lambda_1, \dots , \lambda_p \supset \lambda$.
%Now we consider the maximal chain-recurrent geodesic lamination $\mu$ contained in all $\lambda_j$ appearing in the expression of vectors $v \in F$ as above.
%

\subsection{Proof of \cref{exposed}}
Suppose first that $\lambda$ is an unmeasured lamination.
Let $m$ be a positive transverse measure supported on $\lambda$.
Recall that every point in $F_\lambda$ is infinitesimally most stretched by $\lambda$ and conversely every unit vector that infinitesimally most stretches $\lambda$ is contained in $F_\lambda$ (See \cref{characterisation}.)  
Therefore, a point  $v$ in $\mathcal S_x$ lies on $F_\lambda$ if and only if $d\log \ell_{(\lambda,m)}(v)=1$.
Taking one point $v \in F_\lambda$, we have an expression  $F_\lambda=\{v+w \in \mathcal S_x \mid d\log\ell_{(\lambda,m)}(w)=0\}$, hence $F_\lambda$ is the intersection of $\mathcal S_x$ and a hyperplane.

Conversely, suppose that  $F_\lambda$ is expressed as $\mathcal S_x \cap H$ for some hyperplane $H$.
By Theorem 5.1 of Thurston \cite{ThM}, there is a measured lamination $\mu$ such that $H$ coincides with $H_\mu:=\{v+w\mid d\log\ell_\mu(w)=0\}$.
Since $v$ can be any vector lying on $H$, we can choose $v$ to be contained in $F_\lambda$.
Then, for any minimal component $\lambda_0$ of $\lambda$ equipped with a positive transverse measure $m_0$, we have $d\log \ell_{(\lambda_0,m_0)}(w)=1$ for any $w \in H$ since $\lambda_0$ is always infinitesimally stretched by the unit speed on $F_\lambda$.
It follows that for the hyperplane $H_{\lambda_0}=\{v+w \mid d\log\ell_{(\lambda_0,m_0)}(w)=0\}$, the intersection $\mathcal S_x \cap H_{\lambda_0}$ contains $\mathcal S_x \cap H_\mu$, hence, by \cref{unique stretch}, $\mu$ contains $\lambda_0$.
%any stretch vector along a maximal chain-recurrent lamination containing $\lambda$ lies on $F_\lambda=H\cap \mathcal S_x$, we see that $\lambda_0$ must be contained in $\mu$.
Since this holds for every minimal component of $\lambda$,  the support of $\mu$ must contain every minimal component of $\lambda$.
Let $\hat \lambda$ be the union of minimal components of $\lambda$.
We shall prove that $\lambda=\hat \lambda$, which concludes the proof.

%Now we shall exclude two possibilities: (a) one is when the support  $\mu$ contains a component which is not contained in $\lambda$, (b) the other possibility is that there is a leaf not contained in the 
Suppose that $\lambda \setminus \hat \lambda\neq \emptyset$.
Then we should consider two possibilities: (a) one is when $(\lambda \setminus \hat\lambda)$ is disjoint from  $\mu$, and (b) the other one is when $(\lambda \setminus \hat \lambda)$ intersects $\mu$ transversely.
Now, there is a maximal chain-recurrent geodesic lamination $\mu'$ containing the support of $\mu$ but intersecting $\lambda$ transversely in both cases (a) and (b).
Then by definition, we have $\v_x(\mu') \in H_\mu\cap \mathcal S_x=F_\lambda$. But since $\lambda$ intersects $\mu'$ transversely, we see, by \cref{unique stretch}, that  $\v_x(\mu')$ cannot lie in $F_\lambda$.
This is a contradiction, and thus we have shown that $\lambda=\hat \lambda$.

\subsection{Proof of \cref{extreme}}
To prove the \lq only if' part, suppose that $v$ is  an extreme point.
By \cref{face}, $v$ is contained in $F_\lambda$ for some chain-recurrent geodesic  lamination $\lambda$.
Then $v$ is expressed as $v=\alpha_1 \v_x(\lambda_1)+ \dots + \alpha_n \v_x(\lambda_n)$ for some distinct maximal chain-recurrent geodesic  laminations $\lambda_1, \dots , \lambda_n$ containing $\lambda$ and some positive $\alpha_1, \dots , \alpha_n$ with $\alpha_1+\dots + \alpha_n=1$.
Suppose that $n > 1$.
Then by \cref{unique stretch}, $\lambda_1$ cannot be infinitesimally most stretched by $\v_x(\lambda_2)$.
We consider two points $u$ and $w$ in $F_\lambda$ defined by $u=(\alpha_1-\epsilon)\v_x(\lambda_1)+(\alpha_2+\epsilon)\v_x(\lambda_2)+ \alpha_3 \v_x(\lambda_3)+\dots + \alpha_n\v_x(\lambda_n)$ and $w=(\alpha_1+\epsilon)\v_x(\lambda_1)+(\alpha_2-\epsilon)\v_x(\lambda_2)+\alpha_3\v_x(\lambda_3)+\dots + \alpha_n\v_x(\lambda_n)$ for sufficiently small positive $\epsilon$.
If $u=w$, then we have $\v_x(\lambda_1)=\v_x(\lambda_2)$,  contradicting the fact that $\lambda_1$ is not infinitesimally most stretched by $\v_x(\lambda_2)$.
Since  $v=tu+(1-t)w$  for $t=1/2$, we see that $v$ is contained in the interior of a segment connecting $u$ with $w$.
Hence $v$ cannot be an extreme point, contradicting the assumption.
Therefore,  the only possibility is $n=1$, and we have proved that $v=\v_x(\lambda_1)$ is a stretch vector.

Now we turn to the \lq if' part.
Let $\lambda$ be a maximal chain-recurrent geodesic lamination.
We shall show that the stretch vector $\v_x(\lambda)$ is an extreme point.
Suppose that $\v_x(\lambda)$ is contained in the interior of a segment $[u, w]$ with distinct two points $u, w\in \mathcal S_x$.
Then $\v_x(\lambda)$ is expressed as $\v_x(\lambda)=tu+(1-t)w$ for $t \in (0,1)$.
This implies that the segment connecting $u$ and $w$ lies on $\mathcal S_x$, hence it is contained in a face.
By \cref{face}, there is a face  $F_\mu$ for a chain-recurrent geodesic lamination $\mu$ which contains both $u$ and $w$.
Therefore we can express $u$ and $w$ by $u=\beta_1 \v_x(\mu_1)+ \dots \beta_p\v_x(\mu_p)$ and $w=\gamma_1 \v_x(\nu_1)+\dots + \gamma_q\v_x(\nu_q)$ for positive $\beta_1, \dots , \beta_p; \gamma_1, \dots , \gamma_q$ with $\beta_1+\dots + \beta_p=\gamma_1+ \dots + \gamma_q=1$, and maximal chain-recurrent geodesic laminations $\mu_1, \dots , \mu_p, \nu_1, \dots , \nu_q$ all containing $\mu$.
Then we have $\v_x(\lambda)=t\beta_1 \v_x(\mu_1)+ \dots +t\beta_p\v_x(\mu_p)+(1-t)\gamma_1 \v_x(\nu_1)+\dots + (1-t)\gamma_q\v_x(\nu_q)$.
The left hand side has $\lambda$ as the largest  infinitesimally stretched chain-recurrent geodesic lamination whereas that of the right hand side is $\mu_1 \sqcap \dots \sqcap \mu_p \sqcap \nu_1\sqcap \dots \sqcap \nu_q$.
By the uniqueness of  the largest infinitesimally most stretched chain-recurrent geodesic lamination, $\lambda=\mu_1 \sqcap \dots \sqcap \mu_p \sqcap \nu_1\sqcap \dots \sqcap \nu_q$.
It follows that $p=q=1$ and $\mu_1=\nu_1$, meaning that $u=w$.
This is a contradiction.
%Suppose that $\v_\lambda=\alpha_1 v_{\mu_1}+ \dots +\alpha_k v_{\mu_k}$ for maximal chain-recurrent geodesic laminations $\mu_1, \dots , \mu_k$ containing $\mu$ where $\alpha_1, \dots , \alpha_k >0, \alpha_1+\dots +\alpha_k=1$.
%By the same argument as in the proof of \cref{face}, we see that $\mu_1\Since $\lambda$ is maximal, the only possibility is $k=1$ and $\mu_k=\lambda$, which implies that $\lambda$ is an extreme point.
%
%Then it is not contained in $\mu$ either since $\mu$ is a measured lamination.
%There are two possibilities: either $\ell$ is disjoint from $\mu$ or intersects $\mu$ transversely.

\section{Infinitesimal rigidity}
As an application of \cref{face,extreme}, we shall give an alternative and shorter approach to two results in \cite{Infinitesimal}:  the topological  infinitesimal rigidity  result (Theorem  1.8)  and the equivariance of stretch vectors (Corollary 1.14). Unlike the approach in \cite{Infinitesimal}, the new approach does not need to  invoke cotangent spaces.
The two results follow from the following theorem.

\begin{theorem}
\label{rigid}
Let $f \colon T_x \teich(S) \to T_y \teich(S)$ be a linear isometry between tangent spaces of the Teichm\"{u}ller space of $S$.
Then  there is a diffeomorphism $g \colon S \to S$ such that $f(F_\lambda)=F_{g(\lambda)}$ for all chain-recurrent geodesic laminaions $\lambda$.
In particular if $\lambda$ is maximal, then we have $f(\v_\lambda)=\v_{g(\lambda)}$.
\end{theorem}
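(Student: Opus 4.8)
The plan is to extract from $f$ a symmetry of the combinatorics of geodesic laminations on $S$, to recognise the curve complex inside that combinatorics, to invoke curve-complex rigidity to produce $g$, and finally to check that $g$ realises $f$ on every face.

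Since $f$ is a linear isomorphism preserving $\Vert\cdot\Vert_{\th}$, it maps $\mathcal S_x$ bijectively onto $\mathcal S_y$ and preserves everything defined from the linear--convex structure alone: it sends faces to faces, exposed faces to exposed faces, extreme points to extreme points, and, being a homeomorphism between the two spheres, it preserves dimensions of faces and all incidence relations among them. By \cref{face}, at $x$ and at $y$ the faces are exactly the sets $F_\lambda$, indexed bijectively by the chain-recurrent geodesic laminations; hence $f$ induces a bijection $\Phi$ of the set of chain-recurrent geodesic laminations on $S$ with $f(F_\lambda)=F_{\Phi(\lambda)}$. Using \cref{face} --- in particular the identity $F_\lambda\cap F_\mu=F_{\lambda\cup\mu}$ valid when $\lambda,\mu$ do not intersect transversely, and its consequence that $F_\mu\subset F_\lambda$ if and only if $\lambda\subset\mu$ --- both $\Phi$ and $\Phi^{-1}$ preserve inclusion, preserve transversality (two laminations intersect transversely exactly when the corresponding faces are disjoint), satisfy $\Phi(\lambda\cup\mu)=\Phi(\lambda)\cup\Phi(\mu)$ when $\lambda,\mu$ are not transverse, and preserve the dimensions of the $F_\lambda$. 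In particular $\Phi$ permutes the minimal laminations, since these are precisely the laminations whose face is maximal among the faces of the sphere.

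The crucial --- and, I expect, hardest --- step is to recognise the multicurves among chain-recurrent laminations by a property preserved by $\Phi$. A simple closed curve is a minimal lamination, so what must be done is to separate, within the minimal laminations, the simple closed curves from the minimal laminations that fill a subsurface of positive complexity. I would carry this out by induction on topological complexity, using the dimensions of the faces $F_\mu$ as $\mu$ runs over the laminations containing a given minimal lamination, together with the combinatorial pattern in which those faces are nested --- equivalently, analysing the link of $F_\lambda$ in the face poset and the topological types of the complementary subsurfaces it forces. The essential difficulty is that the local convex geometry of $\mathcal S_x$ near a single face does not by itself distinguish a simple closed curve from a uniquely ergodic minimal lamination carried by a proper subsurface; only the global combinatorics of the whole family of faces does. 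Once the multicurves are identified, preservation of disjointness shows that $\Phi$ restricts to an automorphism of the curve complex $\mathcal C(S)$.

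By curve-complex rigidity (Ivanov, Korkmaz, Luo), for a closed surface $S$ of genus at least $2$ every automorphism of $\mathcal C(S)$ is induced by an element of the extended mapping class group; let $g\colon S\to S$ be a diffeomorphism representing it, so $\Phi(c)=g(c)$ for every essential simple closed curve $c$, and hence $\Phi(\gamma)=g(\gamma)$ for every multicurve $\gamma$ since $\Phi$ is union-preserving on disjoint families (for genus $2$ the choice of representative is immaterial: the hyperelliptic involution fixes every simple closed curve, hence every lamination). It remains to prove that $\Phi(\lambda)=g(\lambda)$ for all chain-recurrent $\lambda$. Since $g$ acts on $(\teich(S),\dth)$ and stretch maps are natural for this action, $g\circ\str_x(\lambda,t)=\str_{g\cdot x}(g(\lambda),t)$, the differential of the $g^{-1}$-action is a linear isometry carrying $F_{g(\lambda)}$ to $F_\lambda$, so after composing $f$ with it we may assume that $\Phi$ fixes every multicurve. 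Now every chain-recurrent lamination $\mu$ equals $\bigcap_{\hat\mu\in\mcr(\mu)}\hat\mu$, and $\Phi$ carries $\mcr(\mu)$ bijectively to $\mcr(\Phi(\mu))$; hence it suffices to show that $\Phi$ fixes every maximal chain-recurrent lamination $\hat\mu$. For such $\hat\mu$, the lamination $\Phi(\hat\mu)$ is again maximal and has the same closed leaves as $\hat\mu$ (it contains the same multicurves), and since $\Phi$ also preserves the adjacency of maximal laminations under elementary moves --- these being detected by the one-dimensional faces --- a further rigidity argument on this flip structure, anchored by the fixed multicurves, yields $\Phi(\hat\mu)=\hat\mu$. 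Therefore $\Phi(\lambda)=g(\lambda)$, so $f(F_\lambda)=F_{g(\lambda)}$ for every chain-recurrent geodesic lamination $\lambda$; and when $\lambda$ is maximal, $\mcr(\lambda)=\{\lambda\}$ forces $F_\lambda=\{\v_x(\lambda)\}$ and $F_{g(\lambda)}=\{\v_y(g(\lambda))\}$, whence $f(\v_x(\lambda))=\v_y(g(\lambda))$.
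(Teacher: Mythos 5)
Your overall architecture is the same as the paper's: use the convex structure to induce a bijection on chain-recurrent laminations, recognise the curve complex, invoke Ivanov's theorem, and then extend from curves to all laminations. But at the step you yourself flag as the crucial one --- separating simple closed curves from the other minimal laminations by a property that $f$ must preserve --- you only describe what you \emph{would} do (``induction on topological complexity, using the dimensions of the faces \dots together with the combinatorial pattern in which those faces are nested''), and you correctly observe that the local convex geometry near a single face cannot do this. That is precisely where an actual invariant has to be produced, and none is. The paper supplies one: the \emph{exposed face height} of $F_{|\lambda|}$, i.e.\ the length of the longest descending chain of exposed faces starting at $F_{|\lambda|}$. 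Since exposed faces correspond (by \cref{exposed}) to supports of measured laminations and reverse inclusion, this height is governed by how many components a measured lamination containing $\lambda$ can have; \cref{height} shows it equals $3g-4$ exactly when $\lambda$ is a weighted simple closed curve \emph{or} its minimal supporting surface is a one-holed torus or four-holed sphere. The residual ambiguity between these cases is then killed by a pants-decomposition argument (\cref{scc}): if $c$ sits in a pants decomposition $P$, there is a curve $d$ with $i(c,d)>0$ and $i(P\setminus c,d)=0$, and this configuration is incompatible with $f_*(|c|)$ having a positive-complexity supporting surface, because $f$ preserves nullity of intersection numbers via \cref{intersection}. Without something playing the role of these two lemmas, your proof does not go through.

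Your final step also diverges from the paper and is likewise incomplete: after normalising so that $\Phi$ fixes every multicurve, you appeal to ``a further rigidity argument on this flip structure'' to conclude that every maximal chain-recurrent lamination is fixed, and you use the unproved identity $\mu=\bigcap_{\hat\mu\in\mcr(\mu)}\hat\mu$. Neither is established. The paper avoids this entirely: every chain-recurrent lamination $\lambda$ is a Hausdorff limit of multicurves $c_i$, the stretch vectors satisfy $\v_\lambda=\lim_i\v_{c_i}$ (this continuity is what \cref{closed} provides), and so $f(F_\lambda)=F_{g(\lambda)}$ follows by continuity from the already-established statement for multicurves. I would recommend replacing your last paragraph with this limiting argument, and replacing your ``hardest step'' paragraph with the exposed-face-height invariant and the pants-decomposition exclusion.
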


To prove this, we need to introduce the following notion (\cref{def:exposed}) and three lemmas (\cref{height,intersection,scc}).

\begin{definition}[Exposed face] \label{def:exposed}
An exposed face $F$ of $\mathcal S_x$ is said to have {\em exposed face height} $n$ if there is a descending sequence $F=F_0 \supsetneqq F_1 \supsetneqq \dots \supsetneqq F_n$ of exposed faces which is the longest among such sequences.
\end{definition}

\begin{lemma}
\label{height}
Let $\lambda$ be a measured lamination and let $F_{|\lambda|}$ be the face it defines by \cref{face}.
Then $F_{|\lambda|}$ has exposed face height $3g-4$ if and only if either $\lambda$ is  a weighted simple closed curve or the minimal  supporting surface of $\lambda$ is either a torus with one hole or a sphere with four holes.
\end{lemma}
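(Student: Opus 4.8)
The plan is to reduce the statement to a computation of the dimension of the face $F_{|\lambda|}$ together with a description of how exposed subfaces arise. First I would record the general principle, coming from \cref{face} and \cref{exposed}: the exposed faces contained in $F_{|\lambda|}$ are exactly the $F_\nu$ with $\nu$ an unmeasured lamination properly containing $|\lambda|$ (equivalently, containing $|\lambda|$ and strictly larger), and $F_\nu \subsetneqq F_\mu$ iff $\nu \supsetneqq \mu$ as unmeasured laminations. Thus the exposed face height of $F_{|\lambda|}$ equals the length of the longest strictly increasing chain of unmeasured (i.e. measured-lamination-support) laminations starting at $|\lambda|$ and ending at a maximal chain-recurrent lamination — one less than the ``combinatorial depth'' of $|\lambda|$ inside a maximal chain-recurrent lamination, counted through supports of measured laminations only. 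I would make this precise and note that such a maximal chain-recurrent lamination $\hat\lambda$ is complete (by the Lemma in \S2.2), hence has $3g-3$ complementary ideal triangles and, counting leaves, its underlying unmeasured structure sits at the top of a chain of length governed by $3g-3$.

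Next I would carry out the counting. For a complete geodesic lamination $\hat\lambda$ on a closed surface of genus $g$, an Euler characteristic count gives $\hat\lambda$ a ``complexity'' of $6g-6$ (the number of sides of ideal triangles), and the relevant chains of sublaminations-that-are-supports have maximal length related to the number of minimal components one can successively delete. The key quantitative fact is: the face $F_{|\lambda|}$ itself, as a convex body, has a dimension that is computed by the cataclysm/shear coordinates associated with any maximal chain-recurrent $\hat\lambda \supset |\lambda|$, and exposed-face height of $F_{|\lambda|}$ is one less than this dimension plus one — concretely, I expect the bookkeeping to yield that $F_{|\lambda|}$ has exposed face height $3g-4$ exactly when $|\lambda|$ is as ``large'' as possible short of being a single simple closed curve's support while still leaving $3g-3$ worth of room, which forces the complement $S \setminus |\lambda|$ to be as simple as possible: either $|\lambda|$ is a single weighted simple closed curve (so $\Sigma(\lambda)$ is an annulus and the complement has maximal combinatorial room), or $\Sigma(\lambda)$ is a one-holed torus or a four-holed sphere (the two surfaces of ``complexity one'' in the Euler-characteristic sense, $3g'-3+n'$ with the count equal to $1$ after capping). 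I would verify both directions: (⇐) in each of the three listed cases, explicitly exhibit a maximal descending chain of exposed faces of length $3g-4$ by peeling off one minimal component (or one ideal-triangle diagonal that can be promoted to a closed curve) at a time, using that each intermediate lamination is again the support of a measured lamination; (⇒) if $\Sigma(\lambda)$ has higher complexity or $\lambda$ has a non-compact leaf structure making $|\lambda|$ strictly bigger, produce a strictly longer chain (height $> 3g-4$), and if $\Sigma(\lambda)$ is ``too small'' in a way not on the list, the chain is strictly shorter.

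The main obstacle I anticipate is the exact combinatorial bookkeeping in the (⇒) direction: ruling out the possibility that some exotic $|\lambda|$ — say with several minimal components spread over subsurfaces whose complexities happen to add up right — also achieves height exactly $3g-4$. Handling this cleanly will require the observation that the exposed-face height is \emph{additive} over the minimal supporting subsurfaces of the components of $|\lambda|$ together with the contribution of the complementary pieces, reducing the whole problem to: which connected essential subsurfaces $\Sigma'$ (with the relevant ``height'' invariant) contribute exactly the right amount. I would isolate this as a sub-claim, prove additivity from \cref{face} (intersections of faces correspond to unions of laminations, $F_\lambda \cap F_\mu = F_{\lambda\cup\mu}$), and then the classification falls out of the standard fact that the only connected surfaces with the minimal positive value of $3g'-3+n'$ are the one-holed torus and the four-holed sphere, with the annulus/simple-closed-curve case treated separately because there $|\lambda|$ itself carries no interior complexity. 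The remaining steps — checking that each peeling operation genuinely lands in an exposed face, i.e. preserves the ``support of a measured lamination'' property, and that one cannot do better — are then routine given \cref{exposed} and \cref{Hausdorff closed}.
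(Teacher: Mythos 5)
Your opening reduction is the same as the paper's: by \cref{exposed}, exposed subfaces of $F_{|\lambda|}$ correspond order-reversingly to supports of measured laminations containing $|\lambda|$, so the exposed face height is the length of the longest strictly ascending chain of such supports starting at $|\lambda|$. From there, however, your argument goes off track in three concrete ways. First, such a chain cannot ``end at a maximal chain-recurrent lamination'': a maximal chain-recurrent lamination on a closed surface is complete, hence has isolated non-compact leaves and is \emph{not} the support of a measured lamination, so by \cref{exposed} the corresponding face is not exposed. The chain must terminate at the support of a measured lamination with the maximal number of minimal components, e.g.\ a pants decomposition. Second, the quantitative engine you propose --- relating exposed face height to the dimension of $F_{|\lambda|}$ via cataclysm/shear coordinates and the $6g-6$ sides of ideal triangles --- is never carried out (``I expect the bookkeeping to yield\ldots'') and is not obviously the right invariant: nothing guarantees that a maximal descending chain of exposed faces drops dimension by exactly one at each step, and the dimension of $F_{|\lambda|}$ (a set of convex combinations of stretch vectors along maximal laminations containing $|\lambda|$) is a different quantity from the one that controls the height. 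The paper's count is far more elementary: every strict inclusion of supports adds at least one minimal component, a measured lamination on $S$ has at most $3g-3$ minimal components, so the height is at most $3g-4$, with equality forcing $\lambda$ to be connected and the complement of its minimal supporting surface to accommodate $3g-4$ further disjoint curves --- which happens exactly when $\Sigma(\lambda)$ is an annulus, a one-holed torus, or a four-holed sphere (the paper phrases this as: otherwise a pants decomposition of $\Sigma(\lambda)$ has at least two interior curves, which eat into the budget).

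Third, your ``$\Rightarrow$'' direction has the inequality backwards: you propose to show that higher complexity of $\Sigma(\lambda)$ produces a \emph{longer} chain of height $>3g-4$, but the height can never exceed $3g-4$, and higher complexity of the supporting surface makes the chain strictly \emph{shorter} (fewer disjoint curves fit in the complement). Your worry about ``exotic'' disconnected $\lambda$ whose subsurface complexities ``add up right'' is legitimate in spirit, but it dissolves under the component count: if $\lambda$ has $k\ge 2$ minimal components the chain length is already at most $3g-3-k<3g-4$, so no additivity lemma is needed. In short, the skeleton (order-reversing correspondence plus the classification of complexity-one subsurfaces) agrees with the paper, but the central counting step is missing and partly incorrect as stated; replacing the dimension count by the count of minimal components would repair the argument.
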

\begin{proof}
We first observe that by \cref{exposed},  the exposed faces correspond one-to-one to the unmeasured laminations on $S$, and that from the definition, given two measured laminations $\lambda$ and $\mu$,  the inclusion $F_{|\lambda|} \subsetneqq F_{|\mu|}$  holds if and only if $|\lambda|\supsetneqq |\mu|$.
Therefore a descending sequence of length $n$ corresponds to an ascending sequence of supports of measured laminations of length $n$.
Since a measured lamination can have at most $3g-3$ components, such an ascending sequence always has length at most $3g-4$.

If $\lambda$ is a weighted simple closed curve, then we can extend $|\lambda|$ to  a pants decomposition $c_1\sqcup \dots \sqcup c_{3g-3}$ of $S$ with $c_1=|\lambda|$.
This shows that $F_{|\lambda|}$ has exposed face height at least $3g-4$, and  hence equal to $3g-4$ by a remark in the preceding paragraph.
If $\lambda$ has a torus with one hole $T$ as minimal supporting surface, then, adding to $\lambda$ a maximal system of disjoint closed geodesics in $S \setminus \mathrm{Int} T$, we get a measured lamination consisting of $3g-3$ components.
This gives an ascending sequence of $3g-3$ supports of measured laminations.
The same holds in the case when $\lambda$ has a sphere with four holes as its minimal supporting surface.
Therefore in both cases $F_{|\lambda|}$ has exposed face height equal to $3g-4$, as in the case of weighted simple closed curve.
Thus we have proved the \lq if ' part.

We show the \lq only if' part by showing the contrapositive. Suppose that $\lambda$ is  neither a weighted simple closed curve nor has a one-holed torus or a sphere with four holes as its minimal supporting surface.
Let $\Sigma$ be its minimal supporting surface.
Since a pants decomposition of $\Sigma$ contains at least two simple closed curves in the interior of $\Sigma$, a measured lamination in $S \setminus \mathrm{Int} \Sigma$ can have  components fewer than $3g-4$.
Therefore the longest possible ascending sequence starting from $\lambda$ has length less than $3g-3$. This shows that the exposed face height of $F_{|\lambda|}$ is less than $3g-4$.
\end{proof}

\begin{lemma}
\label{intersection}
For two chain-recurrent laminations $\lambda$ and $\mu$, the faces $F_\lambda$ and $F_\mu$ have non-empty intersection if and only if $\lambda$ and $\mu$ do not intersect transversely.
\end{lemma}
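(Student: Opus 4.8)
The plan is to note that this lemma is already a part of \cref{face} --- precisely the clause asserting that $F_\lambda$ and $F_\mu$ intersect if and only if $\lambda$ and $\mu$ do not intersect transversely --- so the proof is essentially a reference to that theorem. For completeness I would recall the short argument in both directions, since it makes transparent why non-transversality is the relevant condition and it will be reused in the proof of \cref{rigid}.

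For the ``only if'' direction I would take $v\in F_\lambda\cap F_\mu$ and, using the explicit description of faces in \cref{face}, write $v=\alpha_1\v_x(\lambda_1)+\dots+\alpha_k\v_x(\lambda_k)$ with positive weights summing to $1$ and each $\lambda_i$ a maximal chain-recurrent geodesic lamination containing $\lambda$. For any minimal component $\lambda_0$ of $\lambda$ carrying a positive transverse measure $m$, the stretch map along $\lambda_i$ expands every leaf of $\lambda_i$, and in particular every leaf of $\lambda_0$, by the same factor, so $d\log\ell_{(\lambda_0,m)}(\v_x(\lambda_i))=1$; taking the convex combination yields $d\log\ell_{(\lambda_0,m)}(v)=1$, that is, $\lambda$ is infinitesimally most stretched by $v$. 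Repeating the computation with $\mu$ in place of $\lambda$ shows that $\mu$ is infinitesimally most stretched by $v$ as well, and then \cref{unique stretch} forbids $\lambda$ and $\mu$ from intersecting transversely.

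For the ``if'' direction, assume that $\lambda$ and $\mu$ do not intersect transversely. Then $\lambda\cup\mu$ is again a chain-recurrent geodesic lamination, so $\mcr(\lambda\cup\mu)$ is non-empty; picking any $\hat\nu\in\mcr(\lambda\cup\mu)$, that is, a maximal chain-recurrent geodesic lamination containing both $\lambda$ and $\mu$, the stretch vector $\v_x(\hat\nu)$ lies in $F_\lambda\cap F_\mu$ by the very definition of these faces in \cref{face}. Hence $F_\lambda\cap F_\mu\neq\emptyset$; in fact it coincides with $F_{\lambda\cup\mu}$, so one recovers the full corresponding assertion of \cref{face}.

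The only input here that is not purely formal is the fact that $\lambda\cup\mu$ is chain-recurrent whenever $\lambda$ and $\mu$ are non-transverse chain-recurrent laminations; this is already built into \cref{face} --- the symbol $F_{\lambda\cup\mu}$ appearing there presupposes it --- so I would cite it rather than reprove it. I therefore expect no genuine obstacle: the statement is a bookkeeping consequence of \cref{face} and \cref{unique stretch}, repackaged in the form convenient for the rigidity argument of the next section.
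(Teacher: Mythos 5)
Your proof is correct and matches the paper's: the ``if'' direction is essentially word-for-word the paper's argument (the non-transverse union $\lambda\cup\mu$ is chain-recurrent, so $F_{\lambda\cup\mu}\subset F_\lambda\cap F_\mu$ is non-empty), and your observation that the statement is already contained in the last sentence of \cref{face} is accurate. For the ``only if'' direction you argue directly that any $v\in F_\lambda\cap F_\mu$ infinitesimally most stretches both laminations and then invoke \cref{unique stretch}, which is exactly step (4) of the paper's proof of \cref{face}; the paper's printed proof of this lemma instead routes through the classification of faces ($F_\lambda\cap F_\mu$ is itself a face, hence equals some $F_\nu$ with $\nu$ containing both $\lambda$ and $\mu$), but the two arguments are interchangeable and rest on the same inputs.
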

\begin{proof}
If $\lambda$ and $\mu$ do not intersect transversely, then the geodesic lamination $\lambda \cup \mu$ is also chain-recurrent, and from the definition we get $F_{\lambda\cup \mu} \subset F_\lambda \cap F_\mu$.
Therefore, in this case, $F_\lambda$ and $F_\mu$ have non-empty intersection.

Conversely, suppose that $F_\lambda \cap F_\mu\neq \emptyset$.
Since $F_\lambda \cap F_\mu$ is a face, by \cref{face}, there is a chain-recurrent geodesic lamination $\nu$ such that $F_\nu=F_\lambda \cap F_\mu$.
Since $F_\nu$ is contained in both $F_\lambda$ and $F_\mu$, by \cref{characterisation}, we see that $\nu$ contains both $\lambda$ and $\mu$.
This is possible only when $\lambda$ and $\mu$ do not intersect transversely.
\end{proof}

The last two lemmas imply the following.

\begin{lemma}
\label{scc}
Let $f \colon T_x \teich(S) \to T_y \teich(S)$ be a linear isometry as in the statement of \cref{rigid} and let $c$ be a  simple closed geodesic on $(S,x)$.
Then there is a simple closed curve $d$ such that $f(F_{|c|})=F_{|d|}$.
\end{lemma}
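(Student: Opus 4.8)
The plan is to exploit the fact that a linear isometry $f\colon T_x\teich(S)\to T_y\teich(S)$ transports the entire convex structure of $\mathcal S_x$ onto that of $\mathcal S_y$. Being linear and norm-preserving, $f$ carries $\mathcal B_x$ onto $\mathcal B_y$ and support hyperplanes onto support hyperplanes, hence it induces an inclusion-preserving bijection between the faces of $\mathcal S_x$ and those of $\mathcal S_y$ which restricts to a bijection between exposed faces; in particular $f$ preserves exposed face height. By \cref{face} and \cref{exposed} every exposed face of $\mathcal S_x$ is $F_{|\mu|}$ for a measured lamination $\mu$, and by \cref{height} the exposed face $F_{|c|}$ of a simple closed curve $c$ has exposed face height $3g-4$. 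Therefore $f(F_{|c|})$ is an exposed face of $\mathcal S_y$ of exposed face height $3g-4$, so by \cref{face}, \cref{exposed} and \cref{height} it equals $F_{|\rho|}$ for a measured lamination $\rho$ whose support is either a simple closed curve, or fills a one-holed torus, or fills a four-holed sphere. Everything reduces to excluding the last two alternatives.

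To do this I would isolate an invariant of a height-$3g-4$ exposed face that can be read off purely from the incidence pattern among height-$3g-4$ exposed faces, and is therefore preserved by $f$ thanks to \cref{intersection}. Call such a face $F$ \emph{saturated} if there is a height-$3g-4$ exposed face $G\neq F$ meeting $F$ such that $G$ meets every height-$3g-4$ exposed face met by $F$. Since by \cref{intersection} two faces $F_{|\lambda|}$ and $F_{|\mu|}$ meet if and only if $\lambda$ and $\mu$ are not transverse, saturation depends only on the combinatorics of (non)transversality among the laminations attached to these faces, and so $f$ preserves saturation.

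Next I would check that $F_{|\rho|}$ is saturated whenever $|\rho|$ fills a one-holed torus or a four-holed sphere $\Sigma$, and that $F_{|c|}$ is not saturated for a simple closed curve $c$. In the filling case the witness is $G=F_{|b|}$ for a boundary curve $b$ of $\Sigma$: here $b$ is disjoint from $\rho$, while any lamination among our three families that is non-transverse to $\rho$ must be disjoint from $\mathrm{Int}\,\Sigma$ (since $\rho$ fills it), hence can be isotoped into $S\setminus\mathrm{Int}\,\Sigma$ and is therefore disjoint from the boundary curve $b$; so $F_{|b|}$ meets everything $F_{|\rho|}$ meets. For a simple closed curve $c$, a witness would have to correspond to a lamination $\sigma$ that is disjoint from $c$ and non-transverse to every lamination disjoint from $c$; testing this against simple closed curves in $S\setminus c$ and using $-\chi(S\setminus c)=2g-2\ge 2$, one rules this out — if $\sigma$ is a simple closed curve it is crossed by some curve of $S\setminus c$, and if $\sigma$ fills a one-holed torus or four-holed sphere inside $S\setminus c$, then since the Euler characteristic of that subsurface is at most $2\le 2g-2$ (the borderline case $g=2$ with $\sigma$ filling a four-holed sphere being excluded because $S\setminus c$ is then $S_{1,2}$ or $S_{1,1}\sqcup S_{1,1}$ and contains no incompressible four-holed sphere filling it) there is again a simple closed curve of $S\setminus c$ crossing $\sigma$. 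Hence $F_{|c|}$, and so $f(F_{|c|})$, is not saturated, which eliminates the one-holed-torus and four-holed-sphere possibilities and yields $f(F_{|c|})=F_{|d|}$ for a simple closed curve $d$.

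The step I expect to be the main obstacle is exactly this last dichotomy: confirming that the simple-closed-curve faces are precisely the non-saturated height-$3g-4$ exposed faces. This needs a careful, though elementary, analysis in terms of essential subsurfaces and intersection numbers, with the small-genus case $g=2$ and the possibility that the boundary curves of the four-holed sphere fail to be pairwise non-isotopic in $S$ each requiring a separate check. One must also keep straight the book-keeping, coming from \cref{face}, \cref{exposed} and \cref{height}, that identifies the height-$3g-4$ exposed faces with exactly those three families of laminations, so that \lq\lq saturated'' really is a dichotomy among them.
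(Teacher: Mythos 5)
Your proposal is correct, and its first half coincides with the paper's proof: both use the fact that a linear isometry preserves exposed faces, their incidence pattern and their exposed face height, together with \cref{exposed} and \cref{height}, to reduce to showing that $f(F_{|c|})$ cannot be $F_{|\rho|}$ with $|\rho|$ filling a one-holed torus or a four-holed sphere. Where you diverge is the exclusion step. The paper transports a concrete configuration: it extends $c$ to a pants decomposition $P$, picks a curve $d$ with $i(c,d)>0$ and $i(P\setminus c,d)=0$, and observes that if $f_*(|c|)$ filled a small subsurface then some component of $f_*(|P\setminus c|)$ would be a frontier component of its minimal supporting surface, so everything transverse to $f_*(|c|)$ would be transverse to $f_*(|P\setminus c|)$, contradicting the preservation of disjointness from \cref{intersection}. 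You instead define an intrinsic invariant (``saturation'') of the incidence graph of height-$(3g-4)$ exposed faces and show it separates the simple-closed-curve faces from the filling ones; your witness $F_{|b|}$ for a frontier curve $b$ plays exactly the role of the paper's component of $f_*(|P\setminus c|)$. Your route is a bit longer, since non-saturation of $F_{|c|}$ requires testing all three families of potential witnesses (your case analysis is correct, and can be streamlined: for a witness $\sigma$ filling a small subsurface $\Sigma'$ that misses $c$, any essential non-peripheral curve inside $\Sigma'$ crosses $\sigma$ and misses $c$, so the Euler-characteristic comparison and the special $g=2$ discussion are unnecessary); what it buys is a criterion that is purely lattice-theoretic and manifestly symmetric in $x$ and $y$. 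One small imprecision: a lamination non-transverse to $\rho$ need not be disjoint from $\mathrm{Int}\,\Sigma$ --- it may share leaves with $\rho$ or have a component isotopic to a boundary component of $\Sigma$ --- but in every case each of its minimal components is non-transverse to $b$, so your conclusion stands.
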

\begin{proof}
Since $f$ is a linear isometry, it takes an exposed face to an exposed face.
Therefore, for any measured lamination $\lambda$, there is a measured lamination $\lambda'$ such that $f(F_{|\lambda|})=F_{|\lambda'|}$.
In this situation, we shall denote $|\lambda'|$ by $f_*(|\lambda|)$.
We use the symbol $f_*(\lambda)$ to denote some measured lamination supported on $|\lambda'|$.
This is not uniquely determined of course, but we use this just for nullity of intersection number, which does not depend on the choice of a transverse measure.
Now, for a simple closed geodesic $c$, we need only to show that $f_*(|c|)$ is also a simple closed geodesic.

Since $f$ preserves the exposed face height, by \cref{height}, either $f_*(|c|)$ is a simple closed geodesic or its minimal supporting surface  is either a torus with one hole or a sphere with four holes.
We can exclude the latter two cases by an argument similar to the one used in \S5 of \cite{alb} as follows.
Suppose, seeking a contradiction, that $f_*(|c|)$ is not a simple closed curve.
We extend  $c$ to a pants decomposition $P$ of $S$.
Then, 
\begin{quote}

(*)  there exists a measured lamination $d$ such that $i(c,d)>0$ but $i(P\setminus c, d)=0$.

\end{quote}

As was shown in the proof of \cref{height}, $f_*(|P|)$ has $3g-3$ components.
Since we assumed that $f_*(|c|)$ is not a simple closed curve, this implies that at least one component of $f_*(|P\setminus c|)$ is a frontier component of the minimal supporting surface of $f_*(|c|)$.
This means that for any measured lamination $d$ with $i(f_*(|d|), f_*(c))>0$, we also have $i(f_*(|d|), f_*(P\setminus c))>0$.
This contradicts the fact (*), since $f_*$ preserves the nullity of intersection number by \cref{intersection}.
\end{proof}

Now we can complete the proof of \cref{rigid}.
\begin{proof}
Let $c$ be a simple closed geodesic on $(S,x)$.
By \cref{scc}, there is a simple closed geodesic $d$ on $(S,y)$ such that $F_{|d|}=f(F_{|c|})$.
By taking $c$ to $d$, we have an auto-bijection $h$ on the set of vertices $\mathcal C_0(S)$ of the curve complex $\mathcal C(S)$.
Furthermore, if $i(c,c')=0$ for two simple closed geodesics $c$ and $c'$ on $(S,x)$, then by \cref{intersection}, $i(h(c), h(c'))=0$.
This shows that $h$ can be extended to a simplicial automorphism $h \colon \mathcal C(S) \to \mathcal C(S)$.
By the famous theorem of Ivanov \cite{Iv}, there is a diffeomorphism $g \colon S \to S$ which induces $h$ on $\mathcal C(S)$.
Then we have $f(F_{|c|})=F_{|g(c)|}$ by our choice of $g$.
Evidently the same holds even when $c$ is a multi-curve.

Since any chain-recurrent geodesic lamination is a Hausdorff limit of multi-curves, then, for any chain-recurrent geodesic lamination $\lambda$, there is a sequence of multi-curves $c_i$ converging to $\lambda$ in the Hausdorff topology.
By the definition of $F_\lambda$ in \cref{face} and since  $\mathbf v_\lambda=\lim_{i \to \infty} \v_{c_i}$, we have $f(F_\lambda)=F_{g(\lambda)}$.
By \cref{face}, if $\lambda$ is maximal, then $F_\lambda$ consists of only one point $\mathbf v_\lambda$.
Therefore $f(\mathbf v_\lambda)=\v_{g(\lambda)}$ in this case.
\end{proof}

\bigskip
\noindent Assaf Bar-Natan,
Model Six Operations Inc.,
1500-701 W Georgia St,
Vancouver, BC V7Y 1C6, Canada
\\
email: assaf.barnatan@gmail.com

\medskip

\noindent Ken'ichi Ohshika,
Department of Mathematics,
Gakushuin University,
Mejiro, Toshima-ku, Tokyo, Japan, and
Max-Planck-Institut für Mathematik,
Vivatsgasse 7, 53111 Bonn, Germany
\\
 email: ohshika@math.gakushuin.ac.jp
 
 \medskip
  
 \noindent Athanase Papadopoulos,
Institut de Recherche Mathématique Avancée
(Université de Strasbourg et CNRS),
7 rue René Descartes,
67084 Strasbourg Cedex France, and
Max-Planck-Institut für Mathematik,
Vivatsgasse 7, 53111 Bonn, Germany
\\
email: papadop@math.unistra.fr

\medskip

\end{document}